\DeclareRobustCommand\widecheck[1]{{\mathpalette\@widecheck{#1}}}
\def\@widecheck#1#2{%
    \setbox\z@\hbox{\m@th$#1#2$}%
    \setbox\tw@\hbox{\m@th$#1%
       \widehat{%
          \vrule\@width\z@\@height\ht\z@
          \vrule\@height\z@\@width\wd\z@}$}%
    \dp\tw@-\ht\z@
    \@tempdima\ht\z@ \advance\@tempdima2\ht\tw@ \divide\@tempdima\thr@@
    \setbox\tw@\hbox{%
       \raise\@tempdima\hbox{\scalebox{1}[-1]{\lower\@tempdima\box
\tw@}}}%
    {\ooalign{\box\tw@ \cr \box\z@}}}
\newcommand\independent{\protect\mathpalette{\protect\independenT}{\perp}}
\def\independenT#1#2{\mathrel{\rlap{$#1#2$}\mkern2mu{#1#2}}}
\newcommand*{\diam}{\operatorname{diam}}
\newcommand*{\TV}{\operatorname{TV}}
\newcommand*{\supp}{\mathrm{supp}}
\begin{document}

\title{\huge Nearly Minimax Optimal Wasserstein Conditional Independence Testing}
\author[1]{Matey Neykov}
\author[2]{Larry Wasserman\footnote{The last three authors are listed \href{https://tinyurl.com/44jnuv2u}{randomly}.}}
\newcommand\CoAuthorMark{\footnotemark[\arabic{footnote}]}
\author[3]{Ilmun Kim\protect\CoAuthorMark}
\author[4]{Sivaraman Balakrishnan\protect\CoAuthorMark}
\affil[1,2,4]{Department of Statistics \& Data Science, Carnegie Mellon University}
\affil[3]{Department of Statistics and Data Science, Yonsei University}

\maketitle{}

\abstract{This paper is concerned with minimax conditional independence testing. In contrast to some previous works on the topic, which use the total variation distance to separate the null from the alternative, here we use the Wasserstein distance. 
In addition, we impose Wasserstein smoothness conditions which on bounded domains are weaker than the corresponding total variation smoothness imposed, for instance, by \cite{neykov2021minimax}. This added flexibility expands the distributions which are allowed under the null and the alternative to include distributions which may contain point masses for instance. We characterize
the optimal rate of the critical radius of testing up to logarithmic factors. Our test statistic which nearly achieves the optimal critical radius is novel, and can be thought of as a weighted multi-resolution version of the $U$-statistic studied by \cite{neykov2021minimax}. 

\section{Introduction}

This paper focuses on conditional independence (CI) testing using the Wasserstein distance. CI testing is a fundamental problem in statistics. It has widespread applications in areas such as causal inference and causal discovery \citep{zhang2012kernel, spirtes2000causation, pearl2014probabilistic} and graphical models \citep{margaritis2005distribution,Koller2009Probabilistic}. In addition it is central to classical statistical concepts such as sufficiency or ancillarity \citep{dawid1979conditional}. On the other hand the Wasserstein distance, and its associated theory of optimal transport, which was originally introduced by \cite{monge1781memoire, kantorovich1942translocation}, has recently seen multiple applications in machine learning, and statistical methodology and theory: see for instance \citep{blanchet2019quantifying} for applications in robust machine learning,  \citep{rubner2000earth, sandler2011nonnegative, li2013novel} for applications in image analysis and  \citep{chernozhukov2017monge, hallin2021distribution, ghosal2022multivariate,manole2021plugin} which study optimal transport maps and use them to define multivariate analogues of the quantile of a distribution. Furthermore, also of note are recent uses of optimal transport in nonparametric hypothesis testing problems \citep{deb2021multivariate, deb2021efficiency}, distributional regression \citep{ghodrati2021distribution}, generative modeling \citep{finlay2020learning,onken2021ot}, fairness in machine learning \citep{gordaliza2019obtaining, black2020fliptest, de2021consistent} and statistical applications in the sciences \citep{komiske2020exploring}.

The Wasserstein distance is 
flexible and, unlike stronger metrics such as the total variation distance, can be small even when one compares continuous to discrete distributions. This versatility makes it 
attractive for problems in conditional independence testing where one may not want to assume a priori that the distribution does not contain point masses for example. It is in fact so natural to use the Wasserstein distribution in problems for CI that we are not the first to look into this problem. \cite{warren2021wasserstein} develops binning based tests for CI testing problems where the underlying conditional distributions are assumed to be Wasserstein smooth. On the surface, this is 
similar to what our paper is concerned with: under Wasserstein smoothness assumptions we formulate a binning based statistic. The main difference between our work and \cite{warren2021wasserstein} is our goal: we aim to find a (nearly) minimax optimal test statistic and characterize the minimax testing rate, whereas \cite{warren2021wasserstein} simply controls the type I and type II errors under certain sufficient conditions. This is a fundamental difference, and our test statistic is markedly distinct from the one used by \cite{warren2021wasserstein}: we use a weighted multiresolution $U$-statistic, whereas \cite{warren2021wasserstein} uses a plugin based statistic which compares the Wasserstein distributions on binned samples. This of course makes our analysis quite distinct from 
that of \citet{warren2021wasserstein}.

We will now give a high level overview of the minimax approach, inspired by \cite{ingster1982minimax,ingster2003nonparametric}, which we undertake. If a null distribution is very close in a certain metric (which in this paper we choose to depend on the Wasserstein distance), to an alternative distribution, tests will have difficulty in distinguishing whether a distribution is coming from the null or the alternative. To remedy this, one can remove distributions which are $\varepsilon_n$-close to the null hypothesis. Our goal is then to discover how small $\varepsilon_n$ can be (as a function of the sample size $n$), so that one can still distinguish the null from the alternative. In addition as we mentioned we impose smoothness conditions both under the null and under the alternative hypothesis. This additional requirement comes as no surprise, since \cite{shah2018hardness} proved that under no conditions CI testing is hard in the sense that the power under any alternative distribution of any test that controls the type I error over all (smooth and non-smooth) CI distributions below $\alpha$ is bounded by $\alpha$.

\subsection{Notation}

We now summarize commonly used notation throughout the paper. 

\begin{definition}[Total Variation Metric]\label{TV:def} The total variation (TV) metric between two distributions $p,q$ on a measurable space $(\Omega, \cF)$ is defined as
\begin{align*}
\TV(p,q) = \sup_{A \in \cF} |p(A) - q(A)| = \frac{1}{2}\|p-q\|_1 = \frac{1}{2}\int \bigg|\frac{dp}{d\nu} - \frac{dq}{d\nu}\bigg|d\nu,
\end{align*}
where the last identity assumes $\nu$ is a common dominating measure of $p$ and $q$, i.e., $p \ll \nu$, $q \ll \nu$ and $\frac{d p}{d \nu}, \frac{dq}{d\nu}$ denote the densities of $p$ and $q$ with respect to $\nu$ (note here that $\nu$ can always be taken as $\nu = p + q$).
\end{definition} 
We will now formalize our notation for conditional distributions. This notation is the same as the one used in \cite{neykov2021minimax} but for completeness we provide details here. If the triplet $(X,Y,Z)$ has a distribution $p_{X,Y,Z}$ we will use $p_{X,Y|Z = z}$ to denote the conditional joint distribution of $X,Y | Z = z$. Additionally $p_{X|Z = z}$ and $p_{Y|Z = z}$ will denote the marginal conditional distributions of $X|Z = z$ and $Y|Z = z$ respectively. The marginal distributions will be denoted with $p_X, p_Y, p_Z$ and joint marginal distributions will be denoted with $p_{X,Y}, p_{Y,Z}, p_{X,Z}$. Furthermore, with a slight abuse of notation, $p_{X,Y|Z}(x,y|z)$ and $p_{X | Z}(x|z)$ and $p_{Y| Z }(y|z)$ will denote the densities of these distributions evaluated at the points $x,y$ and $z$ (or the corresponding probability mass functions when $X$ and $Y$ are discrete).

In addition we will use $\lesssim$ and $\gtrsim$ to mean $\leq$ and $\geq$ up to positive universal constants (which may be different from place to place). If both $\lesssim$ and $\gtrsim$ hold we denote this as $\asymp$. For an integer $n \in \NN$ we use the convenient shorthand $[n] = \{1,2,\ldots,n\}$. 

Finally, for a real number $r \in \RR$ let $\lfloor r \rfloor$ be the largest integer smaller than or equal to $r$, and let $\lceil r\rceil$ be the smallest integer which is at least $r$. 

\subsection{Problem Formulation and Related Works}

In this section we formulate the problem precisely and mention some related works. Let $X,Y,Z \in [0,1]^3$ be three random variables. We are interested in testing $H_0: X \independent Y | Z$ versus the alternative $H_1: X \not\independent Y | Z$. Define the Wasserstein-$1$ distance
\begin{align*}
W_1(\mu,\nu) = \inf_{\gamma \in \Gamma(\mu, \nu)} \int \|x-y\|_2 d \gamma(x,y),
\end{align*}
where $\Gamma$ denotes the set of all couplings between $\mu$ and $\nu$ i.e., all joint distributions with marginals $\mu$ and $\nu$ and $\|\cdot\|_2$ denotes the Euclidean norm. Similarly one can define $W_2(\mu, \nu)$ as  
\begin{align*}
W_2(\mu,\nu) = \bigg[\inf_{\gamma \in \Gamma(\mu, \nu)} \int \|x-y\|^2_2 d \gamma(x,y)\bigg]^{1/2}.
\end{align*}

We will now state several well-known facts about the $W_1,W_2$ and $\TV$ distances which will be helpful throughout this work.
\begin{fact} \label{important:fact:about:W}The following statements hold true:
\begin{enumerate}
\item For any two probability distributions $p,q$ on $[0,1]^2$: 
\begin{align*}W_2(p,q) \lesssim \TV(p,q). \end{align*}
\item For any two probability distributions $p,q$ on $[0,1]^2$: 
\begin{align*}W_1(p,q) \leq W_2(p,q).\end{align*}
\item Wasserstein distance is a proper metric, i.e., for three distributions $p,q,r$ on $[0,1]^2$ we have 
\begin{align*}
W_i(p,q) \leq W_i (q,r) + W_i(r,p), ~~~~ i\in\{1,2\}.
\end{align*}
\item Squared Wasserstein-2 distance is sub-additive on product distributions, i.e., let $p_1,p_2,q_1,q_2$ be probability distributions on $[0,1]$, then 
\begin{align*}
W_2^2(p_1\times p_2,q_1\times q_2) \leq W_2^2(p_1,q_1)  + W_2^2(p_2,q_2).
\end{align*}
\item If $p,q$ are probability distributions on $[0,1]^2$ we have 
\begin{align*}
W_2^2(p,q) \leq \sqrt{2} W_1(p,q).
\end{align*}
\end{enumerate}
\end{fact}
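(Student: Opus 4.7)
The statement bundles five facts, each of which reduces to exhibiting a concrete coupling and bounding its cost via elementary inequalities, so I would dispatch them one at a time.

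For item (1) I would use a maximal coupling $\gamma^\ast$ of $p,q$, which satisfies $\gamma^\ast(\{(x,y):x=y\}) = 1-\TV(p,q)$. Since $[0,1]^2$ has Euclidean diameter $\sqrt{2}$, on the off-diagonal piece $\|x-y\|_2^2 \leq 2$, so
\begin{align*}
W_2^2(p,q) \;\leq\; \int \|x-y\|_2^2\, d\gamma^\ast \;\leq\; 2\,\TV(p,q).
\end{align*}
For item (2), Cauchy--Schwarz gives $\int \|x-y\|_2\, d\gamma \leq \big(\int \|x-y\|_2^2\, d\gamma\big)^{1/2}$ for any coupling $\gamma$; specializing to an optimal $W_2$ coupling yields $W_1(p,q) \leq W_2(p,q)$.

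Items (3) and (4) are the classical facts that Wasserstein distances are metrics and that $W_2^2$ tensorizes. For (3) I would invoke the gluing lemma to splice optimal couplings $\gamma_{pq}$ and $\gamma_{qr}$ into a joint law on $([0,1]^2)^3$ whose $(p,r)$-marginal is a valid coupling, and then combine the triangle inequality for $\|\cdot\|_2$ with Minkowski's inequality in $L^i$. For (4), the product coupling $\gamma_1 \otimes \gamma_2$ of optimal $W_2$ couplings of $(p_1,q_1)$ and $(p_2,q_2)$ is a valid coupling of $p_1 \times p_2$ and $q_1 \times q_2$, and the claim is immediate from $\|x-y\|_2^2 = (x_1-y_1)^2 + (x_2-y_2)^2$ and linearity of expectation.

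For item (5), on $[0,1]^2$ the bound $\|x-y\|_2 \leq \sqrt{2}$ gives the pointwise inequality $\|x-y\|_2^2 \leq \sqrt{2}\,\|x-y\|_2$; integrating against an optimal $W_1$ coupling (which is feasible, though typically not optimal, for $W_2$) yields $W_2^2(p,q) \leq \sqrt{2}\, W_1(p,q)$. The one place I would pause is item (1) as literally written: the natural coupling argument produces $W_2 \lesssim \sqrt{\TV}$, not $W_2 \lesssim \TV$, and the latter is in fact false in general (take $p = \delta_0$ and $q = (1-\varepsilon)\delta_0 + \varepsilon\, \delta_{(1,0)}$ to get $\TV = \varepsilon$ and $W_2 = \sqrt{\varepsilon}$). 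I would therefore interpret the first item as the square-scale bound $W_2^2(p,q) \lesssim \TV(p,q)$, which is what the coupling argument above yields and what the downstream analysis appears to require.
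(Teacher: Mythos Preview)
Your arguments are correct and self-contained; the paper's own proof is essentially a list of citations (Slawski 2022, Villani 2009, Mariucci--Rei\ss\ 2018) for items (1)--(4), and for item (5) it points to the same pointwise inequality $\|x-y\|_2^2 \leq \sqrt{2}\,\|x-y\|_2$ on $[0,1]^2$ that you use. So on items (2)--(5) you reproduce exactly the underlying mathematics behind those references, which is more informative than the paper's bare citations.

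On item (1) you go further than the paper. The paper records $W_2(p,q) \lesssim \TV(p,q)$ and, in its appendix, simply cites the external references without comment. Your counterexample $p=\delta_0$, $q=(1-\varepsilon)\delta_0+\varepsilon\,\delta_{(1,0)}$ is valid and shows that the inequality as literally written cannot hold with a universal constant; the maximal-coupling argument you give yields the correct square-scale bound $W_2^2 \lesssim \TV$ (equivalently $W_2 \lesssim \sqrt{\TV}$), which is indeed what Villani's Theorem~6.15 and the diameter bound actually deliver. This is a genuine correction to the statement, not a gap in your proof; you handle it the right way by flagging the discrepancy and proving the version that is both true and sufficient for the downstream arguments.
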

We defer the proof of this result to the appendix. Let $\cP_0$ denote the set of all conditionally independent distributions supported on $[0,1]^3$, i.e. for all $q \in \cP_0$: $q_{X,Y|Z} = q_{X|Z}q_{Y|Z}$. 
\begin{assumption} \label{Wasserstein:smoothness:assumption}

Define the collection of probability distributions
\begin{align*}
\cP_0^{W}(L) := \{p \in \cP_0 : W_1(p_{X, Y |Z = z}, p_{X, Y |Z = z'}) \leq L |z - z'|, \mbox{ for all } z,z' \in [0,1]\}.
\end{align*}
Suppose that under the null hypothesis the distribution belongs to the class $\cP_0^{W}(L)$. 
\end{assumption}

In this paper we work exclusively with $W_1$ smoothness conditions as in the definition of $\cP_0^{W}(L)$ both under the null, and also under the alternative hypothesis as we will see shortly. Similar smoothness conditions have been used previously to enable binning based approaches to CI testing; see for instance \cite{neykov2021minimax, kim2021local} for total variation smoothness, and also \cite{warren2021wasserstein} for Wasserstein smoothness akin to the one we used above. One advantage of the $W_1$ smoothness in comparison with total variation smoothness as in \cite{neykov2021minimax}, is that on compact domains the $W_1$ distance is smaller than the total variation up to a constant \cite[See Fact \ref{important:fact:about:W} (1), and also Lemma 3 and Theorem 6.15 ][respectively]{slawski2022permuted, villani2009optimal}, and therefore, all previous examples suggested in Section 6 of \cite{neykov2021minimax}, which are total variation smooth also satisfy Wasserstein smoothness as defined in Assumption \ref{Wasserstein:smoothness:assumption}. Unlike total variation smoothness however, Wasserstein smoothness allows for distributions containing point masses; in other words being a mixture of discrete and continuous distributions may be Wasserstein smooth, while not being total variation smooth as is also pointed out by \cite{warren2021wasserstein}.

Let $\cP_1$ denote the class of all non-conditionally independent distributions i.e., the laws of all random variables $X,Y,Z \in [0,1]^3$ such that $X \not \independent Y | Z$. 
 
\begin{assumption}\label{assumption:W2:separation} 

Define the collection of alternative distributions $\cP_1^{W}(L,\varepsilon)$ as follows:
\begin{align*}
\cP_1^{W}(L,\varepsilon) := \{p \in \cP_1: & W_1(p_{X, Y |Z = z}, p_{X, Y |Z = z'}) \leq L |z - z'|, \mbox{ for all } z,z' \in [0,1],\\
& \inf_{q \in \cP_0} \EE_Z W_2(p_{X,Y|Z}, q_{X,Y|Z}) \geq \varepsilon\}.
\end{align*}
In the above definition, the expectation over $Z$ is taken with respect to the distribution $p_Z$ which is the $Z$-marginal of $p_{X,Y,Z}$. We will henceforth assume that the distributions under the alternative hypothesis belong to the class $\cP_1^{W}(L,\varepsilon) $. 

\end{assumption}
We would like to underscore that the $W_2$ distance is a popular distance which is often considered in practice. For instance \cite{rigollet2019uncoupled} use it to estimate the mean vector in the problem of uncoupled isotonic regression. 
As the reader can see we are using the $W_1$ distance to impose smoothness on the distributions while we are using the $W_2$ distance to impose separation between the null and the alternative. Using distinct measures of smoothness and separation is standard. 

See for instance \cite{ery2018remember} where the authors use H\"{o}lder smoothness on the densities and $L_2$ separation in goodness-of-fit problems. Furthermore, since the Wasserstein distance is monotonic (i.e., $W_1 \leq W_2$), the assumed smoothness in $W_1$ distance is weaker than the respective $W_2$ smoothness, hence in order to support more distributions we focus on the $W_1$ smoothness requirement. One final remark that we would like to make on Assumption \ref{assumption:W2:separation} is that for the same amount of separation --- $\varepsilon$ (up to universal constants) --- the Wasserstein separation discards more distributions as compared to the total variation distance. Formally we have
\begin{proposition}\label{W:sep:discards:more:distributions:than:TV:sep} If a distribution $p$ satisfies $\inf_{q \in \cP_0} \EE_Z W_2 (p_{X,Y|Z}, q_{X,Y|Z}) \geq \varepsilon$, then we also have $\inf_{q \in \cP_0} \TV(p,q) \gtrsim \varepsilon$.
\end{proposition}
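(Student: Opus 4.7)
The plan is to fix an arbitrary $q \in \cP_0$ and show directly that $\TV(p,q) \gtrsim \varepsilon$, after which taking the infimum on the left yields the claim. The only subtlety is that the expectation in the hypothesis is taken against $p_Z$, whereas an arbitrary $q \in \cP_0$ need not satisfy $q_Z = p_Z$; this marginal mismatch will be the main obstacle, and I would handle it by a standard ``marginal-swap'' argument.

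Concretely, define an auxiliary distribution $q'$ by $q'_Z := p_Z$ and $q'_{X,Y \mid Z} := q_{X,Y \mid Z}$. Because $q$ is conditionally independent, so is $q'$, hence $q' \in \cP_0$. By the triangle inequality and the fact that marginalization can only decrease TV (so $\TV(q_Z, p_Z) \le \TV(q,p)$),
\begin{align*}
\TV(p, q') \;\le\; \TV(p,q) + \TV(q, q') \;=\; \TV(p,q) + \TV(q_Z, p_Z) \;\le\; 2\,\TV(p,q),
\end{align*}
where the middle equality uses that $q$ and $q'$ share the same $X,Y \mid Z$ conditional. Hence $\TV(p,q) \ge \tfrac{1}{2}\TV(p,q')$.

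Next, since $p$ and $q'$ share the same $Z$-marginal, disintegration gives
\begin{align*}
\TV(p, q') \;=\; \tfrac{1}{2} \int p_Z(z)\, \bigl|p_{X,Y\mid Z}(x,y\mid z) - q_{X,Y\mid Z}(x,y\mid z)\bigr|\, dx\, dy\, dz \;=\; \EE_{Z \sim p_Z}\, \TV\bigl(p_{X,Y\mid Z}, q_{X,Y\mid Z}\bigr).
\end{align*}
Applying Fact \ref{important:fact:about:W}(1) pointwise in $z$, each conditional TV dominates the corresponding $W_2$ up to a universal constant, so $\TV(p,q') \gtrsim \EE_Z W_2(p_{X,Y\mid Z}, q_{X,Y\mid Z})$. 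Since $q \in \cP_0$, the assumption yields $\EE_Z W_2(p_{X,Y\mid Z}, q_{X,Y\mid Z}) \ge \varepsilon$, and combining the inequalities gives $\TV(p,q) \gtrsim \varepsilon$.

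The main obstacle, as noted above, is that a generic competitor $q \in \cP_0$ in the TV infimum may have a different $Z$-marginal than $p$, which prevents a direct disintegration of $\TV(p,q)$. The marginal-swap trick circumvents this cleanly: we pay only a constant factor in replacing $q$ by $q'$, and it is $q'$ (not $q$) that enjoys a tractable disintegration. Everything else reduces to the pointwise comparison $W_2 \lesssim \TV$ on $[0,1]^2$ already recorded in Fact \ref{important:fact:about:W}(1).
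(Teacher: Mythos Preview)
Your proof is correct and follows essentially the same route as the paper's: both arguments reduce to the inequality $\TV(p,q) \geq \tfrac{1}{2}\,\EE_{p_Z}\TV(p_{X,Y|Z}, q_{X,Y|Z})$ and then apply Fact~\ref{important:fact:about:W}(1) pointwise. The only difference is that the paper invokes this inequality as a black box (Lemma~B.4 of \cite{neykov2021minimax}), whereas you supply a clean self-contained proof of it via the marginal-swap $q \mapsto q'$; your version is thus slightly more informative but not a genuinely different approach.
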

\begin{proof}[Proof of Proposition \ref{W:sep:discards:more:distributions:than:TV:sep}]
To see this first observe that on bounded domains $W_2(p,q) \lesssim \TV(p,q)$ by Fact \ref{important:fact:about:W} 1, where we remind the reader that $\lesssim$ denotes inequality up to absolute constant factors. However, from Lemma B.4 of \cite{neykov2021minimax} we know 
\begin{align*}
\TV(p,q) \geq \EE_Z \TV(p_{X,Y|Z}, q_{X,Y|Z})/2 \gtrsim \EE_Z W_2(p_{X,Y|Z}, q_{X,Y|Z}).
\end{align*} 
Thus if two distributions $p$ and $q \in \cP_0$ satisfy $\EE_Z W_2 (p_{X,Y|Z}, q_{X,Y|Z}) \geq \varepsilon$, they also satisfy $\TV(p,q) \gtrsim \varepsilon$.
\end{proof}

To summarize, in comparison to \cite{neykov2021minimax}, the Wasserstein separation is ``stronger'' (by Proposition \ref{W:sep:discards:more:distributions:than:TV:sep}) than TV separation, while the Wasserstein smoothness requirement is ``weaker'' than the corresponding TV smoothness. In order to characterize the complexity of CI testing we use the minimax testing framework, introduced in the work of Ingster and co-authors \citep{ingster1982minimax,ingster2003nonparametric}, and which has since then been 
considered by many authors (see for instance \cite{lepski1999minimax,baraud2002non,diakonikolas2016new,valiant2017automatic,canonne2018testing,ery2018remember, canonne2020survey,balakrishnan2018hypothesis,balakrishnan2017hypothesis,carpentier2021optimal, neykov2021minimax,kim2022minimax,albert2022adaptive}).
Formally, consider the testing problem
\begin{align}
\label{eqn:main}
H_0: p_{} \in \cP_0^{W}(L) \mbox{ vs } H_1: p_{} \in \cP_1^W(L,\varepsilon).
\end{align}
We define the minimax risk of testing as 
\begin{align}\label{minimax:risk}
R_n(\varepsilon) = \inf_{\psi}\bigg\{\sup_{p \in \cP_0^{W}(L) } \EE_p[\psi(\cD_n)] + \sup_{p \in \cP_1^W(L,\varepsilon)} \EE_p [1 - \psi(\cD_n)]\bigg\}\footnotemark,
\end{align}
\footnotetext{Here with a slight abuse of notation, we use $\EE_p$ to denote expectation under i.i.d.~data $\cD_n  = \{(X_1,Y_1,Z_1), \ldots, (X_n, Y_n, Z_n)\}$ where each observation is drawn from $p$.}where the infimum is taken over all Borel measurable test functions $\psi: \supp(\cD_n) \mapsto [0,1]$ (which gives the probability of rejecting the null hypothesis), and $\supp(\cD_n)$ is the support of the random variables $\cD_n = \{(X_1,Y_1, Z_1),\allowbreak \ldots (X_n, Y_n, Z_n)\}$. In the development to follow, we assume that $L$ is a fixed non-zero constant which does not scale with $n$, and so we do not track the dependence of the critical radius on $L$.

In the minimax framework our goal is to study the \emph{critical radius} of testing defined as
\begin{align}\label{critical:radius}
\varepsilon_n(\cP_0^{W}(L), \cP_1^W(L,\varepsilon)) = \inf\bigg\{\varepsilon : R_n(\varepsilon) \leq \frac{1}{3}\bigg\}. 
\end{align}
The constant $\frac{1}{3}$ above is arbitrary, and can be chosen as any small constant. 
The minimax testing radius or the critical radius, corresponds to the smallest radius $\varepsilon$ at which there exists \emph{some test} which reliably distinguishes distributions in $\cH_0$ from those in $\cH_1$ which
are appropriately far from $\cH_0$. The critical radius provides a fundamental characterization of the statistical difficulty of the hypothesis testing problem in~\eqref{eqn:main}.

\subsection{Organization}

The remainder of the paper is structured as follows. In Section \ref{wasserstein:testing:section}, we formulate our test and prove it controls the type I and type II errors under an appropriate condition on the radius of separation. In Section \ref{lower:bound:section}, we state and prove our main lower bound. Finally, we conclude with a brief discussion of future work in Section \ref{discussion:section}.

\section{Wasserstein Testing}\label{wasserstein:testing:section}

In this section we present the main result of the paper. Our goal is to characterize the critical radius $\varepsilon _n$, defined in \eqref{critical:radius}. This involves upper and lower bounding it. Upper bounds are obtained by designing a test and analyzing its Type I and II errors (risk), and lower bounds are obtained via an information theoretic argument. The intuition behind our test construction is rooted in two propositions on the $W_2$ and $W_1$ distances given in the papers \cite{weed2019sharp, indyk2003fast} respectively. These Wasserstein distances can be thought of being approximately weighted ``multiresolution'' total variation distances (see Lemma~\ref{weed:bach:result} below).
 
Leveraging this result along with tests for distributions which are smooth in total variation \citep{neykov2021minimax}, we consider a multiresolution test statistic in order to approximates the $W_2$ separation functional. As we will see, the resulting test yields a nearly (up to logarithmic factors) minimax optimal Wasserstein CI test. The details on the upper bound are given below.

\subsection{Upper Bound}\label{upper:bound:section}

Construct $\cQ$, a collection of rectangular grids $Q^k$, $k \in \{1,\ldots, \lceil \log_2(d) \rceil\}$ with side Euclidean length (mostly) $\frac{1}{2^k}$ centered at a fixed point $\eta \in [0,1]^2$. Here $d$ is an integer defined as the number of bins used for the $Z$ variable. Each cell $A_{ij}^k \in Q^k$ is $A_{ij}^k = A_{i}^k\times A_{j}^{'k}$ where $A_i^k$ and $A_j^{'k}$  are intervals of size (mostly) $\frac{1}{2^k}$ on $[0,1]$ centered at the projections --- $\eta_1$ and $\eta_2$ --- of the point $\eta = (\eta_1,\eta_2)$ on the $x$ and the $y$ axis. We will now formally define the intervals $A_i^k$ for the convenience of the reader. Here the index $i$ ranges in the set $[L]$ where $L = 2 + \lfloor 2^k \eta_1 \rfloor  +  \lfloor 2^k (1-\eta_1) \rfloor $. We have 
\begin{align*}
A_1^k &= \bigg [0, \ \eta_1 - \frac{\lfloor 2^k \eta_1 \rfloor }{2^k}\bigg)\\
A_i^k & = \bigg [\eta_1 + \frac{i - 2 - \lfloor 2^k \eta_1 \rfloor}{2^k}, \ \eta_1 + \frac{i - 1 - \lfloor 2^k \eta_1 \rfloor}{2^k}\bigg),  \mbox{ for } i \in \{2, \ldots, L-1\}\\
A_L^k & = \bigg [\eta_1 + \frac{\lfloor 2^k (1-\eta_1) \rfloor }{2^k}, \ 1\bigg]
\end{align*}
Similarly, one can define the interval $A_j^{'k}$ for $j \in [L']$ where $L' = 2 + \lfloor 2^k \eta_2 \rfloor  +  \lfloor 2^k (1-\eta_2) \rfloor$. See also Figure \ref{fig:test} for a visualization of three such grids. We now restate and prove a proposition of \citep{weed2019sharp} adapted to our setting.

\begin{figure}
\centering

\begin{subfigure}{.3333\textwidth}
  \centering
\begin{tikzpicture}[scale=2.5]
    \def\gridsize{0.5}
    \def\centerx{0.3}
    \def\centery{0.4}
    
    \pgfmathsetmacro\halfgrid{\gridsize*4}
    
    \draw (0, 0) rectangle (1, 1);
    
    \foreach \x in { 0, 0.5}
        \draw (\centerx+\x, 0) -- (\centerx+\x, 1);
        
    \foreach \y in { 0, 0.5}
        \draw (0, \centery+\y) -- (1, \centery+\y);
    
    \filldraw (\centerx, \centery) circle (1pt) node[above left] {$\eta$};
        \filldraw (0, 0) node[below left] {$(0,0)$};
                \filldraw (1, 1) node[above right] {$(1,1)$};
\end{tikzpicture}
  \caption{$Q^1$ : grid of size $\frac{1}{2}$}
  \label{fig:sub1}
\end{subfigure}%
\begin{subfigure}{.333333\textwidth}
  \centering
\begin{tikzpicture}[scale=2.5]
    \def\gridsize{0.25}
    \def\centerx{0.3}
    \def\centery{0.4}
    
    \pgfmathsetmacro\halfgrid{\gridsize*4}
    
    \draw (0, 0) rectangle (1, 1);
    
    \foreach \x in {-0.25, 0,..., 0.5}
        \draw (\centerx+\x, 0) -- (\centerx+\x, 1);
        
    \foreach \y in {-0.25, 0,..., 0.5}
        \draw (0, \centery+\y) -- (1, \centery+\y);
    
    \filldraw (\centerx, \centery) circle (1pt) node[above left] {$\eta$};
        \filldraw (0, 0) node[below left] {$(0,0)$};
                \filldraw (1, 1) node[above right] {$(1,1)$};
\end{tikzpicture}

  \caption{$Q^2$: grid of size $\frac{1}{4}$}
  \label{fig:sub2}
\end{subfigure}

\begin{subfigure}{.333333\textwidth}
  \centering
\begin{tikzpicture}[scale=2.5]
    \def\gridsize{0.125}
    \def\centerx{0.3}
    \def\centery{0.4}
    
    \pgfmathsetmacro\halfgrid{\gridsize*4}
    
    \draw (0, 0) rectangle (1, 1);
    
    \foreach \x in {-.25, -0.125, 0,..., 0.5,.625}
        \draw (\centerx+\x, 0) -- (\centerx+\x, 1);
        
    \foreach \y in {-.375,-.25, -0.125, 0,..., 0.5}
        \draw (0, \centery+\y) -- (1, \centery+\y);
    
    \filldraw (\centerx, \centery) circle (1pt) node[above left] {$\eta$};
        \filldraw (0, 0) node[below left] {$(0,0)$};
                \filldraw (1, 1) node[above right] {$(1,1)$};
\end{tikzpicture}

  \caption{$Q^3$: grid of size $\frac{1}{8}$}
  \label{fig:sub3}
\end{subfigure}

\caption{Collection of grids centered at $\eta$}
\label{fig:test}

\end{figure}
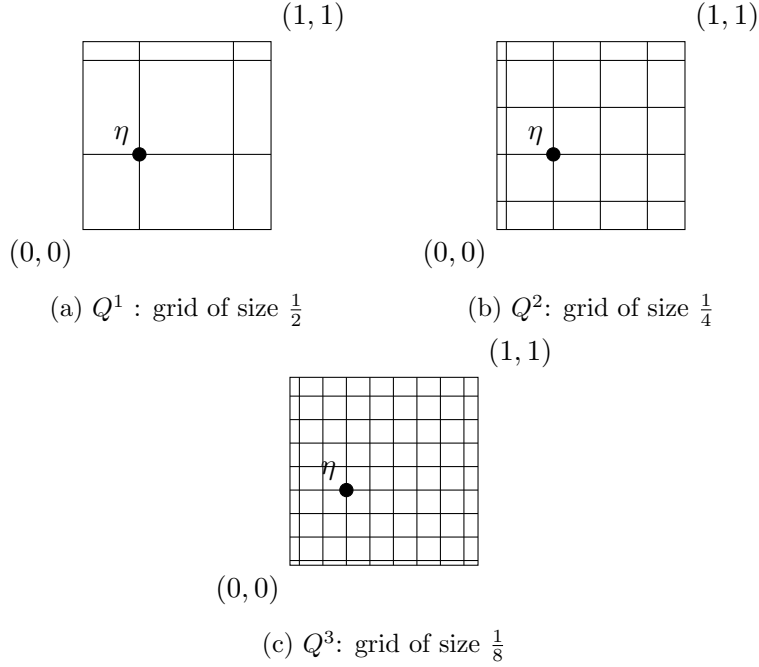

\begin{lemma}\label{weed:bach:result} For any two distributions $p$ and $q$ on $[0,1]^2$, we have the following inequality:
\begin{align}\label{Wasserstein:multiresolution:upper:bound}
W_2^2(p,q) \lesssim \frac{1}{2^{2\lceil \log_2(d) \rceil + 1} } + \sum_{ k = 1}^{\lceil \log_2(d) \rceil} \frac{1}{2^{2k}} \sum_{A^k_{ij} \in Q^k} |p(A^k_{ij}) - q(A^k_{ij})|.
\end{align}
\end{lemma}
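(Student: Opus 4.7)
The plan is to construct an explicit coupling between $p$ and $q$ via a hierarchical, multi-scale matching scheme guided by the nested dyadic grids $Q^1, \ldots, Q^K$ with $K := \lceil \log_2(d) \rceil$; the grids are nested in the sense that each cell of $Q^{k-1}$ subdivides into at most four cells of $Q^k$. As a first reduction, let $p_K$ be obtained by collapsing all $p$-mass in each cell $A \in Q^K$ to an arbitrary representative point of $A$, and define $q_K$ analogously. Since Euclidean displacement within any cell of $Q^K$ is at most its diameter $\sqrt{2}/2^K$, we have $W_2^2(p, p_K), W_2^2(q, q_K) \leq 2/2^{2K}$. Combining these with the triangle inequality for $W_2$ (Fact~\ref{important:fact:about:W}, item 3) and the elementary bound $(a+b+c)^2 \leq 3(a^2+b^2+c^2)$ yields
\begin{equation*}
W_2^2(p, q) \lesssim \frac{1}{2^{2K}} + W_2^2(p_K, q_K),
\end{equation*}
which accounts for the $2^{-2\lceil \log_2(d)\rceil - 1}$ term in \eqref{Wasserstein:multiresolution:upper:bound}.

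To bound $W_2^2(p_K, q_K)$, I would construct a coupling greedily from fine to coarse: for $k$ decreasing from $K$ to $1$, and within each cell $A \in Q^k$, match as much remaining unmatched $p_K$-mass to remaining unmatched $q_K$-mass as possible, transporting the matched mass at cost per unit at most the squared diameter $2/2^{2k}$ of $A$; any final residual is transported arbitrarily within $[0,1]^2$ at cost per unit at most $2$. Let
\begin{equation*}
U_k := \sum_{A \in Q^k}(p(A) - q(A))^+ = \tfrac{1}{2}\sum_{A \in Q^k}|p(A) - q(A)|.
\end{equation*}
A short induction on the cell tree --- using the identity $\min(s_+, s_-) = s_+ - (s_+ - s_-)^+$ with $s_{\pm} := \sum_{A' \subset A,\, A' \in Q^{k+1}}(p(A') - q(A'))^{\pm}$ and $s_+ - s_- = p(A) - q(A)$ --- shows that the total residual unmatched $p$-mass after matching within all cells at levels $K, K-1, \ldots, k$ equals exactly $U_k$. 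Consequently, the mass first matched at level $k$ (but not within any finer cell) equals $U_{k+1} - U_k$.

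Summing the per-level cost contributions gives
\begin{equation*}
W_2^2(p_K, q_K) \leq 2 U_1 + 2 \sum_{k=1}^{K-1} 2^{-2k}(U_{k+1} - U_k),
\end{equation*}
and a summation by parts together with the bound $U_k \leq \tfrac{1}{2}\sum_{A \in Q^k}|p(A) - q(A)|$ reduces the right-hand side to $\lesssim 2^{-2K} + \sum_{k=1}^{K} 2^{-2k} \sum_{A \in Q^k}|p(A) - q(A)|$, yielding \eqref{Wasserstein:multiresolution:upper:bound} once combined with the Step~1 inequality. I expect the main technical obstacle to be the bookkeeping for the hierarchical matching --- specifically, verifying feasibility (that within each parent cell the residual $p$- and $q$-masses coming from the finer level can indeed be paired) and the identity for the mass first matched at each level --- after which the bound reduces to a telescoping sum exploiting the nested dyadic structure of the grids.
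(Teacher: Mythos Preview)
Your argument is correct. The paper's own proof is a two-line invocation of Proposition~1 of \cite{weed2019sharp}: it simply checks that the grids $Q^k$ constitute a dyadic partition (in the sense of that reference) for the scaled metric $\|\cdot\|_2/\sqrt{2}$, and then reads off the bound. What you have written is, in effect, a self-contained reconstruction of the proof of that proposition: the fine-to-coarse greedy matching, the identity $r^k_p(A)=(p(A)-q(A))^+$ for the residual mass after level $k$, and the Abel summation are precisely the engine behind the Weed--Bach bound. So the two routes coincide at the level of ideas but differ in packaging; yours makes the coupling, the constants, and the role of the nested grid structure explicit, while the paper's defers all of this to the citation. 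The one point worth recording explicitly in your write-up is why the grids $Q^k$ are genuinely nested (so that the induction on the cell tree is legitimate): all interior boundaries of $Q^{k-1}$ lie at coordinates of the form $\eta_i + m/2^{k-1}=\eta_i+2m/2^k$, which are among the boundaries of $Q^k$, and the endpoints $0,1$ are common to every level, so each cell of $Q^{k-1}$ is a union of at most four cells of $Q^k$.
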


Since the proof of Proposition \ref{weed:bach:result} follows directly from the result of \cite{weed2019sharp}, we defer it to the appendix. We now describe the test used for establishing an upper bound. First draw $N\sim Poi(n/2)$ samples. If $N > n$, accept the null hypothesis. If $N \leq n$, take the first $N$ samples out of the given $n$ samples and discard the rest. We bin the $Z$ support, i.e., $[0,1]$ in $d$ bins of equal size which we denote by $C_1, \ldots, C_d$. This separates the sample $\cD_N = \{(X_1, Y_1, Z_1), \ldots, (X_N,Y_N,Z_N)\}$ into smaller datasets $\cD_m = \{(X_i, Y_i) : Z_i \in C_m\}$. Let $|\cD_m| = \sigma_m$ denote the sample size of the $m$th bin. Define the function $g^k((x,y)) = (i,j)$ if and only if $(x,y) \in A^k_{ij} \in Q^k$. Next define the sets $\cD_m^k = \{g^k(X_i, Y_i) : Z_i \in C_m\}$ for $m \in [d]$ and $k \in 1,\ldots, \lceil \log_2(d) \rceil$. We now recall the definition of the $U$-statistic from \cite{neykov2021minimax}. For two observations $i$ and $j$ and two indices $x$ and $y$ consider the following expression
\begin{align*}
\phi_{ij}(xy) & = \mathbbm{1}(X_i = x, Y_i = y) - \mathbbm{1}(X_i = x) \mathbbm{1}(Y_j = y) .
\end{align*}
Note that $\phi$ takes a value among $\{-1,0,+1\}$. Next take four observations $i,j,k,l$ and consider the kernel
\begin{align*}
h((X_i, Y_i), (X_j, Y_j), (X_k, Y_k), (X_l, Y_l)) = \frac{1}{4!} \sum_{\pi \in [4!]} \sum_{x, y} \phi_{\pi_1\pi_2}(xy)\phi_{\pi_3 \pi_4}(xy),
\end{align*}
where $\pi$ is a permutation of $i,j,k,l$. Next, construct the corresponding $U$-statistic 
\begin{align*}
U_m(\cD_m^k) := \frac{1}{{\sigma_m \choose 4}} \sum_{i < j < k < l : (i,j,k,l) \in  \Sigma_m} h((X_i, Y_i), (X_j, Y_j), (X_k, Y_k), (X_l, Y_l)),
\end{align*}
where the summation is over choosing 4 distinct elements from $\Sigma_m$, where $\Sigma_m$ denotes the set of distinct indices in the set $\cD_m$. We now define the test statistic:
\begin{align*}
T := \EE_\eta \sum_{k = 1}^{\lceil \log_2(d) \rceil}   \frac{1}{2^{2k}}\sum_{m \in [d]}U(\cD_m^k) \mathbbm{1}(\sigma_m \geq 4) \sigma_m,
\end{align*}
where the expectation above over $\eta$ is taken with respect to uniformly sampling $\eta$ on the grid points of a square grid of side Euclidean length equal to $1/(2^{\lceil \log_2(d)\rceil + 1})$ on $[0,1]^2$ centered at $\mathbf{0} = (0,0)$. We then define the test
\begin{align}\label{test:def}
\psi_\tau(\cD_N) = \mathbbm{1}(T \geq \tau)  \mathbbm{1}(N \leq n).
\end{align}

\begin{remark}[On computing the test $\psi_\tau(\cD_N)$] According to a careful analysis in Section 3.3 of \cite{kim2023conditional} calculating $U(\cD_m^k)$ can be done in $O(\sigma_m)$ operations. This implies that (for a fixed $\eta$) calculating $\sum_{m \in [d]}U(\cD_m^k) \mathbbm{1}(\sigma_m \geq 4) \sigma_m$ takes at most $O(n)$ time; then $\sum_{k = 1}^{\lceil \log_2(d) \rceil}   \frac{1}{2^{2k}}\sum_{m \in [d]}U(\cD_m^k) \mathbbm{1}(\sigma_m \geq 4) \sigma_m$ takes $O(\log_2(d) n)$ time. Since in the end we set $d \asymp n^{2/5}$ for a fixed $\eta$ we have $O(n \log_2(n))$ operations. Finally, since $\eta$ belongs to a grid of at most $16 d^2$ points, we have that the computational cost is $O(d^2 \log_2(d) n) = O(n^{9/5}\log_2(n))$. This is bigger than linear complexity so it can be prohibitive for a large $n$. However we note that the computational complexity of calculating $T$ is better than quadratic time.
\end{remark}

We are now ready to state the main result of the paper.

\begin{theorem}\label{main:theorem} Take $d = \lceil n^{2/5}\rceil$ and set $\tau = \zeta \sqrt{d} \log_2^2 d$ for a sufficiently large constant $\zeta$. Suppose that $\varepsilon > \frac{c (\log_2 d)^{3/4}}{d^{1/2}}$, for a sufficiently large constant $c$. Then 
\begin{align*}
\sup_{p \in \cP_0^{W}(L)} \EE_p \psi_\tau(\cD_N) & \leq \frac{1}{10},\\
\sup_{p \in \cP_1^{W}(L,\varepsilon)} \EE_p (1 - \psi_\tau(\cD_N)) & \leq \frac{1}{10} + \exp(-n/8).
\end{align*}
\end{theorem}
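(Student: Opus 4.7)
The plan is to analyze the first two moments of $T$ separately under $H_0$ and $H_1$ and to conclude via Chebyshev's inequality in each case; a Poissonization step buys independence across the $Z$-bins and accounts for the $\exp(-n/8)$ term in the Type II bound. Since $N \sim \mathrm{Poi}(n/2)$, a Chernoff bound gives $\Pr(N > n) \leq \exp(-n/8)$, which is precisely the extra contribution to the Type II error and is vacuous on the null side because the test then accepts. Conditional on $\{N \leq n\}$, the bin counts $\{\sigma_m\}_{m \in [d]}$ are mutually independent $\mathrm{Poi}(\lambda_m)$ with $\lambda_m = (n/2)\,p(Z\in C_m)$, and within each bin the pairs $(X_i,Y_i)$ are i.i.d.\ from $p_{X,Y \mid Z\in C_m}$, independently across bins---this is the independence I rely on below.

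For the null, write $a_{k,m,\eta}(i,j) := p^k_{X,Y \mid C_m}(i,j) - p^k_{X \mid C_m}(i)\, p^k_{Y \mid C_m}(j)$ for the binned joint-minus-product discrepancy (superscript $k$ referring to the grid $Q^k$ centered at $\eta$, and $C_m$ to the $m$-th $Z$-bin). The Neykov--Balakrishnan $U$-statistic is unbiased for $\|a_{k,m,\eta}\|_2^2$ on $\{\sigma_m \geq 4\}$, and conditional independence gives pointwise $p_{X,Y \mid z} = p_{X \mid z}\,p_{Y \mid z}$, so
\begin{align*}
a_{k,m,\eta}(i,j) \;=\; \int \bigl[p^k_{X \mid z}(i) - p^k_{X \mid C_m}(i)\bigr]\bigl[p^k_{Y \mid z}(j) - p^k_{Y \mid C_m}(j)\bigr]\, d\mu_m(z),
\end{align*}
where $\mu_m$ is the law of $Z$ given $Z \in C_m$. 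The $W_1$-Lipschitzness of $z \mapsto p_{X,Y \mid z}$ at scale $1/d$ bounds this integrand, producing $\EE[T] \ll \tau$. Combined with the $U$-statistic variance bound (in the near-null regime $\var(U(\cD_m^k) \mid \sigma_m) = O(1/\sigma_m^2)$, cf.~\cite{neykov2021minimax,kim2023conditional}), the Poisson-independence of bins, and the geometric weights $2^{-2k}$, this yields $\var(T) = O(d)$. Chebyshev's inequality with $\tau = \zeta \sqrt{d}\log_2^2 d$ and $\zeta$ sufficiently large then delivers Type I error at most $1/10$.

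For the alternative, choose the CI witness $q^\ast_{X,Y \mid z} := p_{X \mid z}\,p_{Y \mid z} \in \cP_0$. Assumption \ref{assumption:W2:separation} and Jensen's inequality give $\EE_Z W_2^2(p_{X,Y \mid Z}, q^\ast_{X,Y \mid Z}) \geq \varepsilon^2$. Applying Lemma \ref{weed:bach:result} pointwise in $z$ (and in $\eta$) and averaging over $Z \sim p_Z$ and the uniform $\eta$-grid, I obtain
\begin{align*}
\sum_{k = 1}^{\lceil \log_2 d \rceil} 2^{-2k}\, \EE_\eta\, \EE_Z\, \|a_{k,Z,\eta}\|_1 \;\gtrsim\; \varepsilon^2 - \frac{1}{d^2} \;\gtrsim\; \varepsilon^2,
\end{align*}
using $\varepsilon \gtrsim \log^{3/4}(d)/\sqrt{d}$ to absorb the $1/d^2$ term. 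By pigeonhole over the $O(\log d)$ levels, some $k^\ast$ satisfies $2^{-2k^\ast}\EE_\eta \EE_Z\|a_{k^\ast,Z,\eta}\|_1 \gtrsim \varepsilon^2/\log d$. To transfer this to $\EE[T]$, I use the $W_1$-smoothness to approximate $\|a_{k,z,\eta}\|_1 \approx \|a_{k,m,\eta}\|_1$ for $z \in C_m$; Cauchy--Schwarz on the at most $O(4^k)$ cells to get $\|a_{k,m,\eta}\|_2^2 \gtrsim 4^{-k}\|a_{k,m,\eta}\|_1^2$; and a second weighted Cauchy--Schwarz across bins (with weights $\lambda_m$ summing to $n/2$) to lower-bound $\sum_m \lambda_m \|a_{k^\ast,m,\eta}\|_1^2$ by $(\sum_m \lambda_m \|a_{k^\ast,m,\eta}\|_1)^2/(n/2)$. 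Combined, these steps give $\EE[T] \gtrsim n\varepsilon^4/\mathrm{polylog}(d)$, which exceeds $\tau$ by a constant factor provided $c$ in $\varepsilon \geq c\log^{3/4}(d)/\sqrt{d}$ is taken large enough. A corresponding variance bound $\var(T) = O(d\cdot\mathrm{polylog}(d))$ plus Chebyshev close the Type II analysis.

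The principal anticipated obstacle is propagating $W_1$-smoothness through the binning operator: unlike the total-variation case treated in \cite{neykov2021minimax}, $W_1$-Lipschitzness in $z$ does not directly control the $\ell_1$-difference $\|p^k_{X,Y \mid z} - p^k_{X,Y \mid z'}\|_1$ of the binned laws, and one needs a careful Kantorovich--Rubinstein-style argument at the bin scale (or, on the null side, one can exploit the product structure in the integrand above and work only with the marginals, which simplifies matters somewhat). A secondary subtlety is the $\eta$-randomization built into $T$: averaging uniformly over a fine grid of centers $\eta$ is precisely what makes Lemma \ref{weed:bach:result} applicable uniformly over distributions, rather than only for a single, possibly unfavorable, grid placement---handling this averaging inside both the null bias bound and the pigeonhole step above requires some care.
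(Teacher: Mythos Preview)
Your overall architecture (Poissonization, bin-level $U$-statistics, Chebyshev) matches the paper's, but there is a genuine gap on the null side and a structural misordering on the alternative side.

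On the null side, you correctly isolate the integrand $[p^k_{X\mid z}(i) - p^k_{X\mid C_m}(i)][p^k_{Y\mid z}(j) - p^k_{Y\mid C_m}(j)]$ and note that $W_1$-Lipschitzness does not directly control the $\ell_1$ norm of the discretized marginal differences. What is missing is the \emph{converse} direction to Lemma~\ref{weed:bach:result}: one needs a bound of the form $\EE_\eta \sum_k 2^{-k} d_{\TV}(P^k_{X\mid z}, P^k_{X\mid z'}) \lesssim (\log d)\bigl(W_1(p_{X\mid z},p_{X\mid z'}) + 1/d\bigr)$, which is an Indyk--Thaper-type inequality (Lemma~\ref{indyk:thaper:on:a:grid} in the paper). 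This is precisely where the $\eta$-randomization does its work, and it runs in the opposite direction from Weed--Bach; without it the target bound $\EE[T] \lesssim n(\log d)^2/d^2$ cannot be reached. Your ``Kantorovich--Rubinstein-style argument at the bin scale'' does not suffice, because for a fixed grid a single discretized $\ell_1$ distance can be order one even when $W_1$ is tiny.

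On the alternative side, your ordering---apply Weed--Bach pointwise in $z$, then ``transfer'' $\|a_{k,z,\eta}\|_1 \to \|a_{k,m,\eta}\|_1$ via smoothness---runs into exactly the same obstruction. The paper avoids it by reversing the order: first pass from $\EE_Z W_2$ to $\sum_m p_m W_2(p_{X,Y\mid C_m}, p_{X\mid C_m}p_{Y\mid C_m})$ by a triangle-inequality argument purely at the $W_2$ level (Lemma~\ref{Wasserstein:smoothness:lemma}), and only then apply Lemma~\ref{weed:bach:result} to the already-binned laws, so no $\ell_1$ transfer is ever needed. Two smaller points: (i) your pigeonhole over levels costs an extra $\log d$, so your route yields $\varepsilon \gtrsim (\log d)/\sqrt d$ rather than $(\log d)^{3/4}/\sqrt d$; the paper instead carries the full quantity $\sum_k 2^{-k}\sqrt{\EE_\eta\|a_{k,m}\|_2^2}$ through and applies Cauchy--Schwarz over $k$ only once; (ii) your implicit replacement of $\EE[\sigma_m\mathbbm{1}(\sigma_m \geq 4)]$ by $\lambda_m$ requires Lemma~3.1 of \cite{canonne2018testing} and in principle a case split on $np_m \gtrless 1$ (though under the stated hypothesis on $\varepsilon$ the small-bin case is vacuous). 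Finally, under the alternative the correct variance bound is $\Var T \lesssim (\log d)(\EE T + d)$, not $O(d\,\mathrm{polylog}\,d)$; the $\EE T$ term is what makes the Chebyshev ratio $\Var T/(\EE T)^2$ small.
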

Setting $d = \lceil n^{2/5}\rceil$, the above result establishes an upper bound for the critical radius as
\begin{align*}
	\varepsilon_n(\cP_0^{W}(L), \cP_1^W(L,\varepsilon)) \leq c_1 \frac{(\log_2 n)^{3/4}}{n^{1/5}},
\end{align*}
where $c_1$ is some positive constant. We now compare this rate to the rates given in \cite{neykov2021minimax}. There are two main results in \cite{neykov2021minimax} regarding the separation radius. 
\begin{enumerate}
\item First we comment on the ``fully'' continuous setting. In this setting, \cite{neykov2021minimax} assume that the distributions are TV smooth, i.e. that the conditional distributions are Lipschitz in the TV sense as a function of the conditioning variable $Z$. Additionally, they 
assume that the distributions $X,Y|Z= z$ have H\"{o}lder continuous density functions with exponent $s$ for all $z$ (see Definitions 2.3 and 2.4 in \cite{neykov2021minimax} for more details). The critical radius in their work scales as $n^{-2s/(5s + 2)}$, which is faster than the $n^{-1/5}$ rate we obtain in this paper, for all sufficiently large $s$ values. 
In this paper we assume that the conditional distributions are Lipschitz in the $W_1$ sense, but in stark contrast to the work of \cite{neykov2021minimax} we do not require additional smoothness on the distributions (such as H\"{o}lder smoothness). While this results in a slower rate, we earn flexibility in terms of the allowed distributions. Indeed, this flexibility is the main benefit afforded by testing using the Wasserstein distance. When testing under separation in the TV metric, even problems simpler than CI testing, such as goodness-of-fit testing, are impossible without additional smoothness assumptions \citep{balakrishnan2017hypothesis}. This is however not the case for testing with separation in the Wasserstein distance \citep{ba2011sublinear}, which is a tractable task even without smoothness assumptions.
\item The Wasserstein smoothness assumptions we impose can also support discrete distributions, and hence it is also sensible to compare our rates with the discrete case considered by \cite{neykov2021minimax}. The rate in the TV smoothness setting is $n^{-2/5}$, which is faster than the $n^{-1/5}$ that we established above. We can conclude that even with stronger separation requirement we impose, the problem of Wasserstein testing is harder than TV testing in the discrete case considered by \cite{neykov2021minimax}.
\end{enumerate}

\noindent The remaining of this section is devoted to the proof of Theorem \ref{main:theorem}. 

\subsection{Proof of Theorem \ref{main:theorem}}
Similarly to the proof of Theorem 5.2 of \cite{neykov2021minimax}, it suffices to show the result assuming that $N \sim Poi(n)$. We will analyze the expectation and variance of $T$ in Section~\ref{Section: Analysis of the Expectation} and Section~\ref{Section: Analysis of the Variance}, respectively. We do so in order to apply Chebyshev's inequality and control the risk from above (see Section~\ref{chebyshevs:ineq:section}). 

\subsubsection{Analysis of the Expectation} \label{Section: Analysis of the Expectation}
In this section we are concerned with controlling the expectation $\EE T$ from below and above under the alternative and the null hypothesis respectively.   Fix $k \in \{1,\ldots, \lceil \log_2(d) \rceil\}$. Starting with the expectation, conditional on $\sigma_m$ with $\sigma_m \geq 4$, we have that 
$$
\EE[U(\cD_m^k) | \sigma_m] = \sum_{i,j} (q^k_{ij}(m) - q_{i\cdot}^k(m)q_{\cdot j}^k(m))^2,
$$
where $q^k_{ij}(m) = P_{X,Y|Z \in C_m}(A_{ij}^k | Z \in C_m)$ and $q_{i \cdot}^k(m) = \sum_j q^k_{ij}(m) = P_{X|Z \in C_m}(A^k_i | Z \in C_m)$, and similarly for $q_{\cdot j}^k(m)$. With a slight abuse of notation, we define the expression $U(\cD_m^k) := \EE[U(\cD_m^k) | \sigma_m] :=  \sum_{i,j} (q^k_{ij}(m) - q_{i\cdot}^k(m)q_{\cdot j}^k(m))^2$ even when $\sigma_m < 4$ even though in this case the $U$-statistic $U(\cD_m^k)$ is not well defined. This is a legitimate operation, since our test statistic $T$ does not ``see'' the values of the $U$-statistic for $m$ such that $\sigma_m < 4$. In other words, since the indicator $\mathbbm{1}(\sigma_m \geq 4) U(\cD_m^k) = 0$ when $\sigma_m < 4$ we can define the value of $U(\cD_m^k)$ to be $\sum_{i,j} (q^k_{ij}(m) - q_{i\cdot}^k(m)q_{\cdot j}^k(m))^2$. Let $p_m = \PP(Z \in C_m)$. 

\paragraph{Analysis under the Alternative Hypothesis.} The goal of this section is to lower bound $\EE T$ under the alternative. We start by looking into the following expression
\begin{align}
\sum_{m\in [d]} \sqrt{\EE[ \EE_\eta [U(\cD_m^k)] | \sigma_m]} p_m & = \sum_{m\in [d]} \sqrt{\EE_\eta \sum_{i,j} \bigl(q^k_{ij}(m) - q_{i\cdot}^k(m)q_{\cdot j}^k(m)\bigr)^2 \nonumber
} p_m \nonumber\\
&  \geq \EE_\eta \sum_{m\in [d]} \sqrt{ \sum_{i,j} \bigl(q^k_{ij}(m) - q_{i\cdot}^k(m)q_{\cdot j}^k(m)\bigr)^2
} p_m \nonumber\\
& \geq  \sum_{m\in [d]} \frac{\sum_{i,j}  \EE_\eta|q^k_{ij}(m) - q_{i\cdot}^k(m)q_{\cdot j}^k(m)|}{2^k + 1} p_m\nonumber\\
& \geq \sum_{m\in [d]} \frac{\sum_{i,j} \EE_\eta |q^k_{ij}(m) - q_{i\cdot}^k(m)q_{\cdot j}^k(m)|}{2^{k + 1}} p_m\label{bound:sqrt:EUD}
\end{align}
where we used Jensen's inequality, the fact that $\sqrt{\sum_{i = 1}^l a_i^2} \geq \sum_{i = 1}^l a_i/\sqrt{l}$ for any real numbers $a_i \in \RR$, and the fact that there are at most $(2^k + 1)^2$ cells in $Q^k$ (here observe that $L \leq 2^k + 2$ (as defined in the beginning of Section \ref{upper:bound:section}); however, $L$ can be $2^k + 2$ only when $\lfloor 2^k \eta_1\rfloor$ and $\lfloor 2^k (1-\eta_1)\rfloor$ are both integers in which case $A_1^k$, $A_L^k$ are $\varnothing$ so that we effectively have $2^k$ intervals in that case; hence we have at most $(2^k + 1)^2$ cells in $Q^k$ since the same logic is valid for $L'$). We will now need the following result which quantifies the error in approximation of the expected $W_2$ with its binned counterpart. 

\begin{lemma}\label{Wasserstein:smoothness:lemma} If the distribution $p_{X,Y,Z}$ is Wasserstein $1$-smooth, i.e., $W_1(p_{X,Y|Z = z}, p_{X,Y|Z=z'}) \leq L|z - z'|$ we have that 
\begin{align*}
\MoveEqLeft \varepsilon \leq \inf_{q \in \cP_0} \EE_Z W_2(p_{X,Y|Z}, q_{X,Y|Z})  \leq \int W_2(p_{X,Y|Z = z}, p_{X| Z = z} p_{Y | Z = z}) d P(z) \\
& \leq \sum_{m \in [d]} W_2(p_{X,Y|Z \in C_m}, p_{X| Z \in C_m} p_{Y | Z \in C_m}) p_m + \kappa (L \max_{m \in [d]} \diam(C_m))^{1/2},
\end{align*}
where $\kappa$ is an absolute constant.
\end{lemma}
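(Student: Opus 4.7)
The plan is to prove each of the three inequalities in turn. The leftmost inequality is nothing but the separation condition built into the definition of $\cP_1^W(L,\varepsilon)$ in Assumption \ref{assumption:W2:separation}, so there is nothing to verify there. The second inequality follows by restricting the infimum over $\cP_0$ to the specific conditionally independent distribution having the same $Z$-marginal $p_Z$ and conditionals $q_{X,Y|Z=z} = p_{X|Z=z}\,p_{Y|Z=z}$; this $q$ lies in $\cP_0$ by construction, and it turns the infimum into the displayed integral.

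The substance of the lemma lies in the third inequality. For each $z \in C_m$, I would start from the triangle inequality in $W_2$ (Fact \ref{important:fact:about:W}(3)):
\begin{align*}
W_2(p_{X,Y|Z=z}, p_{X|Z=z}p_{Y|Z=z}) & \leq W_2(p_{X,Y|Z=z},\, p_{X,Y|Z \in C_m}) \\
& \quad + W_2(p_{X,Y|Z \in C_m},\, p_{X|Z \in C_m}p_{Y|Z \in C_m}) \\
& \quad + W_2(p_{X|Z \in C_m}p_{Y|Z \in C_m},\, p_{X|Z=z}p_{Y|Z=z}).
\end{align*}
The middle term is exactly the quantity appearing in the bound; the outer two are the discretization errors I must control. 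Writing $p_{X,Y|Z \in C_m}$ as the mixture $\int_{C_m} p_{X,Y|Z=z'}\,dP(z')/p_m$ and gluing optimal couplings over $z'$ gives the standard bound
\[
W_2^2(p_{X,Y|Z=z},\, p_{X,Y|Z \in C_m}) \leq \frac{1}{p_m}\int_{C_m} W_2^2(p_{X,Y|Z=z}, p_{X,Y|Z=z'})\,dP(z').
\]
Combining Fact \ref{important:fact:about:W}(5), $W_2^2 \leq \sqrt{2}\,W_1$, with the $W_1$-smoothness hypothesis bounds the integrand by $\sqrt{2}L\diam(C_m)$, so the first remainder is at most $(\sqrt{2}L\diam(C_m))^{1/2}$. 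For the third remainder, I would apply the product sub-additivity of $W_2^2$ (Fact \ref{important:fact:about:W}(4)) to split it into $X$- and $Y$-marginal pieces and then run the same mixture/gluing argument on each marginal, using that $W_1$-smoothness of the joint implies $W_1$-smoothness of either marginal (project the optimal coupling onto the relevant coordinate).

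Integrating this pointwise triangle-inequality bound over $z \in C_m$ against $dP(z)$ and summing over $m \in [d]$ produces the middle sum $\sum_m p_m W_2(p_{X,Y|Z \in C_m}, p_{X|Z \in C_m}p_{Y|Z \in C_m})$ plus a remainder of order $\sum_m p_m (L\diam(C_m))^{1/2} \leq (L\max_m \diam(C_m))^{1/2}$, where the last step uses $\sum_m p_m = 1$, yielding the constant $\kappa$. The one bookkeeping obstacle I anticipate is keeping the squared-versus-unsquared Wasserstein distinctions straight: the triangle inequality is deployed at the $W_2$ level, but both the product sub-additivity and the comparison with $W_1$ are naturally squared-norm statements, so each remainder must be estimated as a $W_2^2$ quantity and then square-rooted before being added back into the $W_2$-triangle-inequality bound.
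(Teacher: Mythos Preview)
Your proposal is correct and follows essentially the same route as the paper: both use the triangle inequality for $W_2$, the mixture-of-optimal-couplings argument to bound $W_2^2(p_{X,Y|Z=z},p_{X,Y|Z\in C_m})$ by an average of $W_2^2$'s and then by $\sqrt{2}\,W_1 \leq \sqrt{2}L\diam(C_m)$, the coupling-projection argument to pass $W_1$-smoothness to the marginals, and the sub-additivity of $W_2^2$ on products to handle the remaining term. The only cosmetic difference is that the paper phrases the triangle inequality as a bound on $|W_2(\cdot)-W_2(\cdot)|$ rather than as a three-term chain, which is equivalent.
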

\begin{remark}
By the elementary inequality $(a + b)^2 \leq 2a^2 + 2b^2$, and the convexity of $x\mapsto x^2$ we have
\begin{align}\label{W2pm:bound}
 \varepsilon^2 & \leq  \Bigl(\inf_{q \in \cP_0} \EE_Z W_2(p_{X,Y|Z}, q_{X,Y|Z}) \Bigr)^2 \nonumber \\
& \leq  \Bigl(\sum_{m \in [d]} W_2(p_{X,Y|Z \in C_m}, p_{X| Z \in C_m} p_{Y | Z \in C_m}) p_m + \kappa L^{1/2} \max_{m \in [d]} \diam(C_m)^{1/2}\Bigr)^2 \nonumber\\
& \lesssim  \sum_{m \in [d]} W^2_2(p_{X,Y|Z \in C_m}, p_{X| Z \in C_m} p_{Y | Z \in C_m}) p_m  + \kappa^2 L \max_{m \in [d]} \diam(C_m).
\end{align}
\end{remark}
\noindent We defer the proof of Lemma \ref{Wasserstein:smoothness:lemma} to the Appendix. Continuing the bound \eqref{bound:sqrt:EUD} we conclude that 
\begin{align} \nonumber
\sum_{k = 1}^{\log_2(d)}\sum_{m\in [d]} \frac{1}{2^{k}}\sqrt{\EE[\EE_\eta[U(\cD_m^k)] | \sigma_m]} p_m &\geq \sum_{k} \sum_{m\in [d]} \frac{\sum_{i,j} \EE_\eta |q^k_{ij}(m) - q_{i\cdot}^k(m)q_{\cdot j}^k(m)|}{2^{2k + 1}} p_m \\ \nonumber
&\geq C \sum_{m \in [d]} W_2^2(p_{X,Y|Z \in C_m}, p_{X|Z \in C_m}p_{Y|Z \in C_m}) p_m - \frac{1}{d^2}\\ 
& \geq C \varepsilon^2 -  \frac{C' L}{d} -  \frac{1}{d^2} =: \Upsilon, \label{Eq: Upsilon}
\end{align}
where $C$ is some absolute constant from \eqref{Wasserstein:multiresolution:upper:bound}, and the $-\frac{1}{d^2}$ comes from the term $\frac{1}{2^{2\lceil \log_2(d)\rceil}}$ 
where the term $\frac{C'L}{d}$ comes from Lemma \ref{Wasserstein:smoothness:lemma}, and more specifically from the last term on the right hand side of \eqref{W2pm:bound}. Note also that the inequality of Lemma \ref{weed:bach:result} holds for any $\eta$, which means that it also holds in expectation.  

Next by Lemma 3.1 of \cite{canonne2018testing}
we have
\begin{align*}
\sum_k \frac{1}{2^{2k}} \sum_{m \in [d]} \EE[\EE_\eta[U(\cD_m^{k})] | \sigma_m] \EE[\sigma_m \mathbbm{1}(\sigma_m \geq 4)] & \geq \gamma \sum_k \frac{1}{2^{2k}}  \sum_{m: (n p_m) > 1} \EE[\EE_\eta[U(\cD_m^{k})]| \sigma_m] n p_m \\
&+ \gamma \sum_k \frac{1}{2^{2k}}   \sum_{m: (n p_m) \leq 1} \EE[\EE_\eta[U(\cD_m^{k})]| \sigma_m] (n p_m)^4,
\end{align*}
for an absolute constant $\gamma$. Since by \eqref{Eq: Upsilon} we have that $\sum_{k = 1}^{\log_2(d)}\sum_{m\in [d]} \frac{1}{2^{k}}\sqrt{\EE[\EE_\eta[U(\cD_m^k)] | \sigma_m]} p_m  \geq \Upsilon$ we have that either 
\begin{align}
\sum_{k = 1}^{\log_2(d)} \frac{1}{2^{k}} \sum_{m: (np_m) > 1} \sqrt{\EE[\EE_\eta[U(\cD_m^{k})]|\sigma_m]} n  p_m & \geq \frac{n \Upsilon}{2}, \mbox{ or } \label{first:case}\\
\sum_{k = 1}^{\log_2(d)} \frac{1}{2^{k}} \sum_{m: (np_m) \leq 1} \sqrt{\EE[\EE_\eta[U(\cD_m^{k})]|\sigma_m]} n  p_m & \geq \frac{n \Upsilon}{2}.\label{second:case}
\end{align} 
We now consider two cases:
\begin{itemize}
\item[i.] In the first case we assume \eqref{first:case} (where we remind the reader that $\Upsilon$ is defined in \eqref{Eq: Upsilon}). By the Cauchy--Schwarz inequality, we have 
\begin{align*}
\sum_{m: (n p_m) > 1} \EE[\EE_\eta[U(\cD_m^{k})] | \sigma_m] n p_m & \geq \frac{(\sum_{m: (np_m) > 1} \sqrt{\EE[\EE_\eta[U(\cD_m^{k})]|\sigma_m]} n  p_m)^2}{\sum_{m: (n p_m) > 1} n p_m} \\
& \geq \frac{(\sum_{m: (np_m) > 1} \sqrt{\EE[\EE_\eta[U(\cD_m^{k})]|\sigma_m]} n  p_m)^2}{n} . 
\end{align*}
Hence
\begin{align*}
\sum_{k} \frac{1}{2^{2k}} \sum_{m: (n p_m) > 1} \EE[\EE_\eta[U(\cD_m^{k})] | \sigma_m] n p_m &\geq \sum_{k} \frac{(\sum_{m: (np_m) > 1} \frac{1}{2^{k}}\sqrt{\EE[\EE_\eta[U(\cD_m^{k})]|\sigma_m]} n  p_m)^2}{n} \\
&\geq \frac{(\sum_{k}\sum_{m: (np_m) > 1} \frac{1}{2^{k}}\sqrt{\EE[\EE_\eta[U(\cD_m^{k})]|\sigma_m]} n  p_m)^2}{n \lceil \log_2 d\rceil}\\
& \gtrsim \frac{n \Upsilon^2}{\lceil \log_2 d\rceil}.
\end{align*}

\item[ii.] In the second case we suppose \eqref{second:case} holds.
Note that for any non-negative sequences $\{a_m\}_{m=1}^n$ and $\{b_m\}_{m=1}^n$, Jensen's inequality yields
\begin{align*}
	\sum_{m=1}^n \frac{a_m^{1/3}}{\sum_{j=1}^n a_j^{1/3}} a_m^{2/3} b_m^4 \geq \biggl( \sum_{m=1}^n \frac{a_m^{1/3}}{\sum_{j=1}^n a_j^{1/3}} a_m^{1/6} b_m \biggr)^4.
\end{align*}
Taking 
\begin{align*}
	a_m = \sum_k \frac{1}{2^{2k}}\EE[\EE_\eta[U(\cD_m^{k})] | \sigma_m] \quad \text{and} \quad b_m = np_m,
\end{align*}
we have
\begin{align*}
\MoveEqLeft \bigg(\sum_{m : (np_m) \leq 1}  \Bigl(\sum_k \frac{1}{2^{2k}}\EE[\EE_\eta[U(\cD_m^{k})] | \sigma_m]\Bigr)^{1/3}\bigg)^3 \sum_k \frac{1}{2^{2k}} \sum_{m : (np_m) \leq 1} \EE[\EE_\eta[U(\cD_m^{k})] | \sigma_m] (n p_m)^4 \\
& \geq \bigg(\sum_{m : (np_m) \leq 1} \sqrt{\sum_k \frac{1}{2^{2k}}\EE[\EE_\eta[U(\cD_m^{k})] | \sigma_m]} n p_m\bigg)^4,
\end{align*}
and therefore
\begin{align*}
\sum_k \frac{1}{2^{2k}}\sum_{m : (np_m) \leq 1} \EE[\EE_\eta[U(\cD_m^{k})] | \sigma_m] (n p_m)^4 \gtrsim \bigg(\sum_{m : (np_m) \leq 1} \sqrt{\sum_k \frac{1}{2^{2k}}\EE[\EE_\eta[U(\cD_m^{k})] | \sigma_m]} n p_m\bigg)^4/d^3,
\end{align*}
since $\EE[\EE_\eta[U(\cD_m^{k})]| \sigma_m] \leq \EE_\eta\big(\sum_{x,y} |q_{xy}(m) -q_{x\cdot}(m)q_{\cdot y}(m)|\big)^2 \leq 4$, and the summation over $k$ reduces to a converging geometric series and finally $|\{m : (n p_m) \leq 1\}| \leq d$. Now, by the Cauchy--Schwarz inequality,
\begin{align*}
\sqrt{\sum_k \frac{1}{2^{2k}}\EE[\EE_\eta[U(\cD_m^{k})] | \sigma_m]} \geq \sum_k \frac{1}{2^k} \sqrt{\EE[\EE_\eta[U(\cD_m^{k})] | \sigma_m])}/\lceil \log_2 d\rceil^{1/2}
\end{align*}
so we conclude
\begin{align*}
\sum_k \frac{1}{2^{2k}}\sum_{m : (np_m) \leq 1} \EE[\EE_\eta[U(\cD_m^{k})] | \sigma_m] (n p_m)^4 \gtrsim \frac{(n \Upsilon)^4}{d^3 \lceil \log_2 d\rceil^2}.
\end{align*}

\end{itemize}
Combining the above results, we have established that under the alternative,
\begin{align} \label{Eq: lower bound on expectation}
	\EE[T] \gtrsim \min \biggl\{ \frac{n \Upsilon^2}{\lceil \log_2 d\rceil}, \, \frac{(n \Upsilon)^4}{d^3 \lceil \log_2 d\rceil^2} \bigg\}.
\end{align}

\paragraph{Analysis under the Null Hypothesis.} Next we will upper bound the expectation of $T$ under the null hypothesis:
\begin{align*}
\sum_{m \in [d]} \EE[\EE_\eta[U(\cD_m^{k})] | \sigma_m] \EE[\sigma_m \mathbbm{1}(\sigma_m \geq 4)] & \leq  n\sum_{m \in [d]} \EE[\EE_\eta[U(\cD_m^{k})] | \sigma_m] p_m. \\
& = n \sum_{m \in [d]} \EE_\eta \sum_{i,j} (q^k_{ij}(m) - q_{i\cdot}^k(m)q_{\cdot j}^k(m))^2 p_m\\
& \leq n \sum_{m \in [d]}\EE_\eta (\sum_{i,j} |q^k_{ij}(m) - q_{i\cdot}^k(m)q_{\cdot j}^k(m)|)^2 p_m\\
& \lesssim n \sum_{m \in [d]}\EE_\eta (\sum_{i,j} |q^k_{ij}(m) - q_{i\cdot}^k(m)q_{\cdot j}^k(m)|) p_m,
\end{align*}
where we remind the reader that we assume $\EE[U(\cD_m^{k}) | \sigma_m] = (q^k_{ij}(m) - q_{i\cdot}^k(m)q_{\cdot j}^k(m))^2$ for all $m$ (even though the value of $U(\cD_m^{k})$ is technically only defined for $m : \sigma_m \geq 4$).
We now remind the reader that $q^k_{ij}(m) = P_{X,Y|Z \in C_m}(A_{ij}^k | Z \in C_m)$ and $q_{i \cdot}^k(m) = \sum_j q^k_{ij}(m) = P_{X|Z \in C_m}(A^k_i | Z \in C_m)$, and similarly for $q_{\cdot j}^k(m)$. Next we will handle the expression 
\begin{align*}
\MoveEqLeft \sum_{i,j} |q^{k}_{ij}(m) - q_{i\cdot}^{k}(m)q_{\cdot j}^{k}(m)| \\
& = \sum_{i,j} \bigg |\int_{C_m} P(A_{ij}^k | Z = z) d\tilde P(z) - \int_{C_m} P(A_{i}^k | Z = z) d\tilde P(z) \int_{C_m} P(A_{j}^{'k} | Z = z) d\tilde P(z)\bigg|\\
& = \sum_{i,j} \bigg |\int_{C_m} P(A_{i}^k | Z = z)P(A_{j}^{'k} | Z = z) d\tilde P(z) - \int_{C_m} P(A_{i}^k | Z = z) d\tilde P(z) \int_{C_m} P(A_{j}^{'k} | Z = z) d\tilde P(z)\bigg|\
  \\  &\leq  \int_{C_m} \sum_{i}\bigg|P_{X | Z = z}(A^k_i) - \int_{C_m} P_{X | Z = z}(A^k_i) d \tilde P(z)\bigg|\\
& \times \sum_{j}\bigg| P_{Y | Z = z}(A_j^{'k}) - \int_{C_m} P_{X | Z = z}(A_j^{'k}) d \tilde P(z)\bigg| d \tilde P(z),
\end{align*}
by Jensen's inequality and where $\tilde P(z) = d P(z)/P(Z \in C_m)$. 

We will now argue that the above is smaller than or equal to the product of total variations. Take the first term. By Jensen's inequality
\begin{align*}
& \sum_{i}\bigg|P_{X | Z = z}(A^k_i) - \int_{C_m} P_{X | Z = z}(A^k_i) d \tilde P(z)\bigg| \leq \int_{C_m}\sum_{i}\bigg|P_{X | Z = z}(A^k_i) -  P_{X | Z = z'}(A^k_i) \bigg|d \tilde P(z')\\
&=2\int_{C_m} d_{\operatorname{TV}}(P^k_{X | Z = z} ,  P^k_{X | Z = z'})d \tilde P(z'),
\end{align*}
where $P^k$ denotes the discretized distributions on the grid.
We now have

\begin{align*}
\MoveEqLeft \EE_\eta \sum_k \frac{1}{2^{2k}} n \sum_{m \in [d]} \biggl(\sum_{i,j} |q^k_{ij}(m) - q_{i\cdot}^k(m)q_{\cdot j}^k(m)|\biggr) p_m \\
& \leq \sum_{m \in [d]}\EE_\eta \sum_k \frac{1}{2^{2k}} n \int_{C_m} \int_{C_m} \int_{C_m}  4 d_{\operatorname{TV}}(P^k_{X|z}, P^k_{X|z'})d_{\operatorname{TV}}(P^k_{Y|z}, P^k_{Y|z''})d \tilde P(z')d \tilde P(z'') d \tilde P(z) p_m\\
&\leq 4 n \sum_{m \in [d]} \int_{C_m} \int_{C_m} \int_{C_m} \biggl( \EE_{\eta_1}\sum_k \frac{1}{2^k} d_{\operatorname{TV}}(P^k_{X|z}, P^k_{X|z'})\biggr)\times 
\\&~~~~~~\qquad\qquad\qquad\qquad~~ \biggl( \EE_{\eta_2}\sum_k \frac{1}{2^k} d_{\operatorname{TV}}(P^k_{Y|z}, P^k_{Y|z''})\biggr)d \tilde P(z')d \tilde P(z'') d \tilde P(z) p_m\\
& \lesssim n (\log_2 d)^2/d^2,
\end{align*}
since by Lemma \ref{indyk:thaper:on:a:grid}, proved below, the summations are bounded as:
\begin{align*}
2 \EE_{\eta_1} \sum_k 1/2^k d_{\operatorname{TV}}(P^k_{X|z}, P^k_{X|z'}) & \leq (\lceil \log_2(d)\rceil + 1)4 \bigg(W_1(P_{X|z}, P_{X|z'}) +\frac{1}{2^{\lceil \log_2(d)\rceil + 1}}\bigg)\\
& \lesssim \log_2 d (L/d + 1/d),
\end{align*}
using the Wasserstein smoothness as in Assumption \ref{Wasserstein:smoothness:assumption} and also inequality \eqref{W_1:joint:bigger:than:marginals}.
Hence under the null, we have 
\begin{align*}
\EE [T ] \leq \frac{C (\log_2 d)^2 n}{d^2}.
\end{align*}

\subsubsection{On a Lemma of \cite{indyk2003fast}}\label{section:indyk:and:thaper}

We now prove a modified result of \cite{indyk2003fast}. The main added twist is the fact that $\eta$ need not be uniform on $[0,1]^q, q \in \NN$ but can be in fact taken to be uniformly distributed on a sufficiently small grid. This has an important practical implication as it enables calculating our test statistic. Although our result contains this additional complication, the proof still follows the idea of \cite{indyk2003fast}. Let
\begin{align*}
\mathcal{Q} = \bigg\{Q^k: k \in 0,1,\ldots, \lceil \log_2(\frac{1}{\varphi})\rceil\bigg\},
\end{align*}
be a collection of grids on $[0,1]^{q}$ for $q \in \mathbb{N}$, with side (Euclidean) length $\frac{1}{2^k}$, centered at the point $\eta \in [0,1]^{q}$ (we will only use the result when $q= 1$). Here $\eta$ lies on a grid of side length $\frac{1}{2^{\lceil\log_2(\frac{1}{\varphi})\rceil + 1}}$ centered at $0$. Let $\mathbb{E}_\eta$ denote the expectation with respect to $\eta$ uniformly sampled on the aforementioned grid.
\begin{lemma}\label{indyk:thaper:on:a:grid} Then we have
\begin{align*}
\mathbb{E}_\eta \sum_{k \in [| \mathcal{Q}|]} \frac{1}{2^k} \sum_{C \in Q^k}  |\mu(C) - \nu(C)| \leq (\lceil \log_2(\frac{1}{\varphi})\rceil + 1)4 q \bigg(W_1(\mu,\nu)/\sqrt{q} +\frac{1}{2^{\lceil \log_2(\frac{1}{\varphi})\rceil + 1}}\bigg). 
\end{align*}
\end{lemma}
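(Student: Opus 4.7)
The plan is to adapt the Indyk--Thaper coupling argument to the setting of a discretely randomized shift $\eta$. Let $M := \lceil \log_2(1/\varphi)\rceil$ and let $\gamma \in \Gamma(\mu,\nu)$ be an optimal transport plan, so that $W_1(\mu,\nu) = \int \|x-y\|_2 \, d\gamma(x,y)$. Writing $\mu(C) - \nu(C) = \int (\mathbbm{1}(x \in C) - \mathbbm{1}(y \in C)) \, d\gamma(x,y)$ and summing over $C \in Q^k$ gives
$$\sum_{C \in Q^k} |\mu(C) - \nu(C)| \leq 2 \int \mathbbm{1}\bigl[x,y \text{ lie in different cells of } Q^k\bigr] \, d\gamma(x,y).$$
After exchanging expectation and integral, the lemma reduces to a pointwise bound, for each $(x,y) \in [0,1]^q \times [0,1]^q$, on the quantity $G(x,y) := \sum_k 2^{-k} \, \mathbb{P}_\eta[x,y \text{ are separated by } Q^k]$.

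The main step is a one-dimensional probability estimate, obtained per coordinate via a union bound. For coordinate $i$, separation at level $k$ happens iff the interval $(2^k x_i - 2^k \eta_i, \, 2^k y_i - 2^k \eta_i]$ contains an integer, which a short computation shows is equivalent to the fractional part $\{2^k \eta_i\}$ lying in a subset $S \subseteq [0,1)$ that is a union of at most two arcs of total Lebesgue measure $\ell := \min(2^k |x_i - y_i|,1)$. Since $\eta_i$ is uniform on the grid $\{j/2^{M+1}\}_{j=0}^{2^{M+1}-1}$, the variable $\{2^k \eta_i\}$ is uniform on a $2^{-(M+1-k)}$-grid of $[0,1)$; a lattice-point count gives at most $\ell \cdot 2^{M+1-k} + 2$ such points in $S$. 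Dividing yields
$$\mathbb{P}_{\eta_i}\bigl[x_i, y_i \text{ are separated at level } k\bigr] \leq \min\bigl(2^k |x_i - y_i|, 1\bigr) + 2^{k - M}.$$
The additive error $2^{k-M}$ is the only departure from the classical continuous-shift result and comes from the ``rounding'' of the arc endpoints onto the discrete grid.

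To finish, I would combine the bounds across coordinates and levels. For each $i$, one has $\sum_{k=0}^{M} 2^{-k} \min(2^k |x_i - y_i|, 1) \leq (M+1) |x_i - y_i|$ (every summand is bounded by $|x_i - y_i|$), and $\sum_{k=0}^{M} 2^{-k} \cdot 2^{k-M} = (M+1)/2^{M}$. Summing over $i \in [q]$ and using $\|x-y\|_1 \leq \sqrt{q}\, \|x-y\|_2$ yields
$$G(x,y) \leq (M+1) \sqrt{q} \, \|x-y\|_2 + q(M+1)/2^{M}.$$
Integrating against $\gamma$, using $\int \|x-y\|_2\, d\gamma = W_1(\mu,\nu)$, and reinstating the factor of $2$ from the coupling inequality yields the claim with room to spare in the constants. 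The main obstacle is the second step: carefully identifying the ``bad'' set of shifts in coordinate $i$ as a union of at most two arcs in $[0,1)$, handling the wrap-around at $0$, and counting lattice points without losing the correct $k,M$-dependence. Once that count is in place, the remaining arithmetic is routine.
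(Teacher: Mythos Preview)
Your proposal is correct and follows essentially the same coupling argument as the paper: write $\sum_{C}|\mu(C)-\nu(C)|$ as $2\int \mathbbm{1}[x,y\text{ separated}]\,d\gamma$, bound the separation probability coordinate-by-coordinate via a union bound, and absorb the discreteness of $\eta$ into an additive $O(2^{k-M})$ term. The one difference is organizational: the paper first stratifies $\gamma$ into dyadic distance shells $S_\ell=\{(x,y):\sqrt{q}/2^{\ell+1}<\|x-y\|_2\le\sqrt{q}/2^\ell\}$, bounds the crossing probability on each shell by $2^k q/2^\ell + 2^k q s$, and only at the end invokes $W_1\ge\sum_\ell(\sqrt{q}/2^{\ell+1})\gamma(S_\ell)$; you instead prove the pointwise bound $G(x,y)\le (M+1)\sqrt{q}\,\|x-y\|_2+q(M+1)2^{-M}$ and integrate it directly against $\gamma$. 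Your route is a bit cleaner since the shell decomposition is unnecessary once one has the per-pair estimate, and your explicit lattice-point count makes the grid correction more transparent than the paper's $(z_i+s)/2^{-k}$ heuristic; both arrive at the same bound (yours with a slightly better constant on the $W_1$ term).
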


\begin{proof} Define
\begin{align*}
S_k = \bigg\{(x,y) : \frac{\sqrt{q}}{2^{k + 1}} < \|x - y\|_2 \leq \frac{\sqrt{q}}{2^k}\bigg\},
\end{align*}
for $k = 0,1,\ldots, \lceil \log_2(\frac{1}{\varphi})\rceil$, and let $S_{ \lceil \log_2(\frac{1}{\varphi})\rceil + 1} = \bigg\{(x,y) : \|x-y\|_2 \leq \frac{\sqrt{q}}{2^{\lceil \log_2(\frac{1}{\varphi})\rceil + 1}}\bigg\}$. Let $\gamma$ be an optimal coupling for the Wasserstein distance $W_1(\mu, \nu)$. By definition, we have the following bound
\begin{align}\label{W1:lower:bound}
W_1(\mu,\nu) \geq \sum_{\ell = 0}^{\lceil \log_2(\frac{1}{\varphi})\rceil} \frac{\sqrt{q}}{2^{\ell+1}} \int_{S_\ell} d \gamma(x,y) = \sum_{\ell = 0}^{\lceil \log_2(\frac{1}{\varphi})\rceil} \frac{\sqrt{q}}{2^{\ell+1}} \gamma(S_\ell). 
\end{align}

We will now re-express the multi-resolution $L_1$ distance in terms of $\gamma$. We have 
\begin{align}
\mathbb{E}_\eta \sum_{k \in [| \mathcal{Q}|]} \frac{1}{2^k} \sum_{C \in Q^k}  |\mu(C) - \nu(C)| & = \sum_{k \in [| \mathcal{Q}|]} \frac{1}{2^k}\mathbb{E}_\eta \sum_{C \in Q^k}   \bigg| \int_{C\times [0,1]^q} d \gamma(x,y) -  \int_{ [0,1]^q \times C} d \gamma(x,y)\bigg| \nonumber \\
& = \sum_{k \in [| \mathcal{Q}|]}\frac{1}{2^k} \mathbb{E}_\eta \sum_{C \in Q^k}   \bigg| \int_{C\times C^c} d \gamma(x,y) -  \int_{ C^c \times C} d \gamma(x,y)\bigg| \nonumber\\
& \leq \sum_{k \in [| \mathcal{Q}|]} \frac{1}{2^k} \mathbb{E}_\eta\sum_{C \in Q^k}  (\gamma(C\times C^c) + \gamma(C^c \times C))\nonumber\\
& = \sum_{k \in [| \mathcal{Q}|]} \frac{1}{2^{k-1}} \mathbb{E}_\eta\sum_{C \in Q^k}  \gamma(C\times C^c) \label{expectation:l1:identity},
\end{align}
where the last identity follows since each two distinct sets $C_1, C_2 \in Q^k$ we have $\gamma(C_1 \times C_2)$ and $\gamma(C_2 \times C_1)$ appearing once in each of the two summations. Next we will control the expression 
\begin{align*}
\mathbb{E}_\eta \sum_{C \in Q^k}   \gamma(C\times C^c) & =  \mathbb{E}_\eta \sum_{C \in Q^k} \sum_{\ell = 0}^{\lceil \log_2(\frac{1}{\varphi})\rceil + 1}  \gamma(C\times C^c \cap S_\ell) \\
& = \mathbb{E}_\eta \sum_{C \in Q^k} \sum_{\ell = 0}^{\lceil \log_2(\frac{1}{\varphi})\rceil + 1} \gamma(S_\ell)  \int \mathbbm{1}_{C \times C^c}(x,y) d \gamma(x,y| S_\ell) \\
&=   \sum_{\ell = 0}^{\lceil \log_2(\frac{1}{\varphi})\rceil + 1} \gamma(S_\ell) \int \mathbb{E}_\eta \sum_{C \in Q^k} \mathbbm{1}( (x,y) \in C \times C^c) d \gamma(x,y| S_\ell).
\end{align*}
Note that $ \mathbb{E}_\eta\sum_{C \in Q^k} \mathbbm{1}( (x,y) \in C \times C^c) = \mathbb{P}_\eta ((x,y) \in \cup_{C \in Q^k} C \times C^c)$ is the probability that the edge $(x,y) \in S_\ell$ ``crosses'' the grid $Q^k$. Let $z_1, \ldots,z_q$ be the lengths of the Euclidean projections of the vector $y -x$ on the axis. The grid is crossed if and only if any of the projections crosses a side of the grid. By the union bound this happens with probability at most 
\begin{align*}\sum_{i \in [q]} \frac{z_i + s}{\frac{1}{2^k}} & = 2^k \sum_{i \in [d]} z_i + 2^k q s \leq 2^k \sqrt{q} \sqrt{\sum_{i \in [d]} z_i^2} + 2^k q s \\
& = 2^k \sqrt{q} \|x-y\|_2 + 2^k q s \leq \frac{2^k q}{2^\ell}+ 2^k q s,
\end{align*}
where the last bound holds since $(x,y) \in S_\ell$ and $s = \frac{1}{2^{\lceil\log_2(\frac{1}{\varphi})\rceil + 1}}$ is the length of the grid for $\eta$. We conclude that 
\begin{align*}
 \sum_{C \in Q^k}  \mathbb{E}_\eta \gamma(C\times C^c) \leq  2^k q s + \sum_{\ell = 0}^{\lceil \log_2(\frac{1}{\varphi})\rceil + 1} \gamma(S_\ell)  \frac{2^k q}{2^\ell}. 
\end{align*}
Going back to \eqref{expectation:l1:identity} we have 
\begin{align*}
\mathbb{E}_\eta \sum_{k \in [| \mathcal{Q}|]} \frac{1}{2^k} \sum_{C \in Q^k}  |\mu(C) - \nu(C)| & \leq \sum_{k \in [| \mathcal{Q}|]} \frac{1}{2^{k-1}} \bigg(2^k q s +  \sum_{\ell = 0}^{\lceil \log_2(\frac{1}{\varphi})\rceil + 1} \gamma(S_\ell)  \frac{2^k q}{2^\ell} \bigg)\\
& \leq (\lceil \log_2(\frac{1}{\varphi})\rceil + 1) 2 qs \\
& + \sum_{k \in [| \mathcal{Q}|]} \sum_{\ell = 0}^{\lceil \log_2(\frac{1}{\varphi})\rceil} \gamma(S_\ell)  \frac{4d}{2^{\ell + 1}} + \sum_{k \in [| \mathcal{Q}|]} \gamma(S_{\lceil \log_2(\frac{1}{\varphi})\rceil + 1}) \frac{2 d}{2^{\lceil \log_2(\frac{1}{\varphi})\rceil + 1}}\\
& \leq (\lceil \log_2(\frac{1}{\varphi})\rceil + 1)4 q \bigg(s/2 +  \frac{W_1(\mu,\nu)}{\sqrt{q}}  +\frac{1}{2^{\lceil \log_2(\frac{1}{\varphi})\rceil + 2}}\bigg),
\end{align*}
where we used \eqref{W1:lower:bound} in the above inequality. Recalling that $s = \frac{1}{2^{\lceil\log_2(\frac{1}{\varphi})\rceil + 1}}$, the above can be made smaller than
\begin{align*}
(\lceil \log_2(\frac{1}{\varphi})\rceil + 1)4 q \bigg(\frac{W_1(\mu,\nu)}{\sqrt{q}} +\frac{1}{2^{\lceil \log_2(\frac{1}{\varphi})\rceil + 1}}\bigg),
\end{align*}
as claimed.
\end{proof}

\subsubsection{Analysis of the Variance} \label{Section: Analysis of the Variance}
We now turn to the analysis of the variance of the test statistic $T$. First of all, the rule of total variance ensures that
$$
\Var T = \EE[\Var [T | \sigma]] + \Var[\EE [T | \sigma]],
$$
where $\sigma = (\sigma_m)_{m \in [d]}$. The first term is

$$
\Var [T | \sigma] = \sum_{m,j \in [d]} \Cov(T^m, T^j | \sigma_m, \sigma_j) = \sum_{m \in [d]} \Var(T^m | \sigma_m),
$$
where $T^m = \sum_{k} 1/2^{2k} \EE_\eta U(\cD_m^k) \mathbbm{1}(\sigma_m \geq 4) \sigma_m$. Since $\Var (\sum_{i \in [k]} X_i) \leq k \sum \Var(X_i)$ (which follows by the fact that $\Cov(X,Y) \leq \Var(X)/2 + \Var(Y)/2$) we have
\begin{align*}
\Var(T^m | \sigma_m) & \leq (\log_2 d)  \sum_{k} \Var (1/2^{2k} \EE_\eta U(\cD_m^k) \mathbbm{1}(\sigma_m \geq 4) \sigma_m | \sigma_m) \\
& =  (\log_2 d)  \sum_{k}\mathbbm{1}(\sigma_m \geq 4) \sigma_m^2 \Var (1/2^{2k} \EE_\eta U(\cD_m^k)  | \sigma_m).
\end{align*}
Now, $ \Var (\EE_\eta U(\cD_m^k)  | \sigma_m )\leq \EE_\eta \Var( U(\cD_m^k)  | \sigma_m, \eta)$. This is so since
\begin{align*}
\Var \bigl[ \EE_\eta U(\cD_m^k)  | \sigma_m \bigr] & = \EE \bigl( \bigl\{ \EE_\eta  \bigl[ U(\cD_m^k) - \EE(U(\cD_m^k) | \sigma_m, \eta) \bigr] \bigr\}^2  | \sigma_m \bigr) \\
& \leq   \EE \bigl(\EE_\eta \bigl\{ \bigl[ U(\cD_m^k) - \EE(U(\cD_m^k) | \sigma_m, \eta) \bigr] \bigr\}^2  | \sigma_m \bigr) \\
& = \EE_\eta \EE  \bigl( \bigl\{  \bigl[U(\cD_m^k) - \EE(U(\cD_m^k) | \sigma_m, \eta) \bigr] \bigr\}^2  | \sigma_m, \eta \bigr) \\
& = \EE_\eta \bigl\{\Var\bigl[U(\cD_m^k) | \sigma_m, \eta\bigr]\bigr\},
\end{align*}
where we used the fact that $\eta$ is independent of all other randomness. Now from Lemma 5.1 of \cite{neykov2021minimax} we can conclude

\begin{align*}
\MoveEqLeft \sum_{m \in [d]} \Var(T^m | \sigma_m) \leq  (\log_2 d)\sum_{m \in [d]} \sigma^2_m \mathbbm{1}(\sigma_m \geq 4) C\sum_{k} \frac{1}{2^{4k}}\EE_\eta \bigg(\frac{\EE[U(\cD_m^k) | \sigma_m, \eta]}{\sigma_m} + \frac{1}{\sigma_m^2}\bigg) \\
&\leq  (\log_2 d)\sum_{m \in [d]}  C\biggl(\sum_{k} 1/2^{2k}\EE [U(\cD_m^k)\sigma_m | \sigma_m] + \mathbbm{1}(\sigma_m \geq 4)\biggr). 
\end{align*}
Taking expectation of the expression above we end up with
$$
\EE[\Var[T | \sigma]] \leq C (\log_2 d)\bigg(\EE[T] + \EE \sum_{m \in [d]} \mathbbm{1}(\sigma_m \geq 4)\bigg) \leq C (\log_2 d)(\EE[T] + d).
$$
For the second term we have
\begin{align*}
\EE[T | \sigma] & = \sum_{m \in [d]} \sigma_m \mathbbm{1}(\sigma_m \geq 4)\sum_{k} 1/2^{2k} \EE[\EE_\eta [U(\cD_m^k)] | \sigma_m] \\
& = \sum_{m \in [d]} \sigma_m \mathbbm{1}(\sigma_m \geq 4) \sum_k 1/2^{2k} \sum_{i,j} \EE_\eta (q^k_{ij}(m) - q_{i\cdot}^k(m)q_{\cdot j}^k(m))^2
\end{align*}
Since the $\sigma_m$ are independent we have
$$
\Var[\EE[T | \sigma]] = \sum_{m \in [d]} \Var[\sigma_m \mathbbm{1}(\sigma_m \geq 4)] \bigg(\sum_k 1/2^{2k} \EE_\eta \sum_{i,j} (q^k_{ij}(m) - q_{i\cdot}^k(m)q_{\cdot j}^k(m))^2\bigg)^2
$$
By Claim 2.1 of \cite{canonne2018testing}, 
we have that $ \Var[\sigma_m \mathbbm{1}(\sigma_m \geq 4)] \leq C'\EE[ \sigma_m \mathbbm{1}(\sigma_m \geq 4)]$, and $ \sum_{i,j} (q^k_{ij}(m) - q_{i\cdot}^k(m)q_{\cdot j}^k(m))^2 \leq ( \sum_{i,j} |q^k_{ij}(m) - q_{i\cdot}^k(m)q_{\cdot j}^k(m)|)^2 \leq 4$ thus
$$
\Var[\EE[T | \sigma]] \leq 4C'\sum_{m \in [d]} \EE[\sigma_m \mathbbm{1}(\sigma_m \geq 4)] \sum_{k} 1/2^{2k} \EE_\eta \sum_{i,j} (q^k_{ij}(m) - q_{i\cdot}^k(m)q_{\cdot j}^k(m))^2 = 4C' \EE[T].
$$

Hence $\Var T \leq   (\log_2 d) C(\EE[T] + d)$. 

\subsection{Putting Things Together}\label{chebyshevs:ineq:section}

Recall that the threshold $\tau = \zeta \sqrt{d}(\log_2 d)^2$ while $d \asymp n^{2/5}$. First we handle the null hypothesis. By Chebyshev's inequality we have
$$
\PP(|T - \EE T| \geq \frac{\tau}{2} ) \leq \frac{4 \Var (T )}{\tau^2} = \frac{C\log_2 d(\EE[T] + d)}{\tau^2} \leq \frac{C\log_2 d(\frac{C (\log_2 d)^2 n}{d^2} + d)}{\zeta d \log^4_2 d} \leq \frac{1}{10},
$$
when $\zeta$ is large enough. In this scenario we have that $T \leq \frac{\tau}{2} + \EE T$ which is of the order $\frac{C n}{d^2} + \sqrt{\zeta d \log^4_2 d}/2$. Under the alternative, as we argued in \eqref{Eq: lower bound on expectation}: $\EE[T] \gtrsim \min \biggl\{ \frac{n \Upsilon^2}{\lceil \log_2 d\rceil}, \, \frac{(n \Upsilon)^4}{d^3 \lceil \log_2 d\rceil^2} \bigg\}.$ When $\Upsilon \geq \frac{(\log_2 d)^{3/2}}{d}$, simple algebra (using $d \asymp n^{2/5}$) shows that $\min \biggl\{ \frac{n \Upsilon^2}{\lceil \log_2 d\rceil}, \, \frac{(n \Upsilon)^4}{d^3 \lceil \log_2 d\rceil^2} \bigg\} = \frac{n \Upsilon^2}{\lceil \log_2 d\rceil}$, so that 
$$
\PP(|T - \EE T| \geq \EE T/2) \leq \frac{4\Var T}{(\EE T)^2} \leq 4 C\bigg(\frac{d\log_2 d}{(\EE T)^2} + \frac{\log_2 d}{\EE T}\bigg) \leq \frac{1}{10},
$$
since $\EE T \geq \zeta \sqrt{d} \log^2_2 d$ in order when $d \asymp n^{2/5}$. 

\section{Lower Bound}\label{lower:bound:section}

In this section we consider a lower bound which nearly matches the upper bound from Theorem \ref{main:theorem}. The main result of this section is as follows.

\begin{theorem}[Critical Radius Lower Bound]\label{first:lower:bound} Let $L \in \RR^+$ be a fixed constant. Then for some absolute constant $c_0 > 0$ the critical radius defined in \eqref{critical:radius} is bounded as
\begin{align*}
\varepsilon_n(\cP_0^{W}(L), \cP_1^W(L,\varepsilon)) \geq  \frac{c_0}{n^{1/5}}.
\end{align*}
\end{theorem}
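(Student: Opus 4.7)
\emph{Overall strategy.} The plan is to apply Ingster's chi-squared mixture method with a family of alternatives whose conditional laws $(X,Y)\mid Z$ are supported on the four points $\{0,1\}^2$. This discreteness is the crucial ingredient: a mass perturbation of amplitude $\delta$ between a distribution on $\{0,1\}^2$ and its product of marginals produces $W_2$-separation of order $\sqrt{\delta}$ (mass must travel a \emph{unit} distance between adjacent corners), whereas a continuous perturbation supported in the unit square would admit a smooth transport of cost only $O(\delta)$. This amplification is exactly what yields the $n^{-1/5}$ rate in Wasserstein separation rather than the $n^{-2/5}$ rate available in the TV-smooth setting of \cite{neykov2021minimax}.

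\emph{Construction.} Let $Z\sim\mathrm{Unif}[0,1]$ and $(X,Y)\in\{0,1\}^2$. Partition $[0,1]$ into $d$ equal bins $C_1,\ldots,C_d$ and fix a smooth nonnegative bump $\phi:[0,1]\to[0,1]$ with $\supp\phi\subseteq(1/4,3/4)$ and bounded derivative. Set $\phi_m(z)=\phi(d(z-(m-1)/d))$, so $\phi_m$ is supported strictly inside $C_m$. Under the null $P_0$ take $X,Y\mid Z$ independent $\mathrm{Bern}(1/2)$; since this conditional law does not depend on $z$, $P_0\in\cP_0^W(L)$ trivially. For each $\rho=(\rho_1,\ldots,\rho_d)\in\{-1,+1\}^d$ define
\begin{align*}
P_\rho(X=x,Y=y\mid Z=z)\ =\ \tfrac{1}{4}\bigl(1+\rho_{m(z)}\,\phi_{m(z)}(z)\,\delta\,g(x,y)\bigr),
\end{align*}
where $g(0,0)=g(1,1)=1,\ g(0,1)=g(1,0)=-1$ and $m(z)$ is the bin of $z$; keep $Z$ uniform. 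I will ultimately take $d\asymp n^{2/5}$ and $\delta=L/d\asymp n^{-2/5}$.

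\emph{Membership in $\cP_1^W(L,\varepsilon)$.} Since the row and column sums of $g$ vanish, the marginals $p_{X|z},p_{Y|z}$ remain $\mathrm{Bern}(1/2)$, so $p_{X|z}p_{Y|z}$ is uniform on $\{0,1\}^2$. A direct optimal transport (moving the surplus mass $\delta\phi_{m(z)}(z)/4$ at each of $(0,0),(1,1)$ into the adjacent deficit corners at unit distance) gives
\begin{align*}
W_2^2(p_{X,Y|z},p_{X|z}p_{Y|z})\ =\ \tfrac{1}{2}\delta\,\phi_{m(z)}(z).
\end{align*}
For an arbitrary conditionally independent $q$, the marginal-projection bound $W_2^2(p,q)\geq W_2^2(p_X,q_X)$ combined with Fact~\ref{important:fact:about:W}(4) (subadditivity of $W_2^2$ on products) yields $W_2(p_{X|z}p_{Y|z},q_{X,Y|z})\leq\sqrt{2}\,W_2(p_{X,Y|z},q_{X,Y|z})$, and the triangle inequality then gives
\begin{align*}
W_2(p_{X,Y|z},q_{X,Y|z})\ \geq\ (1+\sqrt{2})^{-1}\,W_2(p_{X,Y|z},p_{X|z}p_{Y|z}).
\end{align*}
Integrating in $z$ produces $\EE_Z W_2(p_{X,Y|Z},q_{X,Y|Z})\gtrsim\sqrt{\delta}$, uniformly in $q\in\cP_0$. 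Wasserstein smoothness is easy: since $\diam(\{0,1\}^2)=\sqrt{2}$ we have $W_1\leq\sqrt{2}\TV$, and the TV change in the conditional law between $z,z'$ is at most $\delta\,d\,\|\phi'\|_\infty|z-z'|$; thus $W_1(p_{X,Y|z},p_{X,Y|z'})\leq L|z-z'|$ once $\delta\leq cL/d$.

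\emph{Chi-squared budget and Le Cam.} Using $\sum_{x,y}g=0$ and $\sum_{x,y}g^2=4$,
\begin{align*}
\int\frac{p_\rho p_{\rho'}}{p_0}\,dx\,dy\,dz\ =\ 1+\delta^2\sum_m\rho_m\rho'_m\int_{C_m}\phi_m^2(z)\,dz\ \leq\ 1+\frac{c_\phi\delta^2}{d}\sum_m\rho_m\rho'_m.
\end{align*}
Setting $\bar P=2^{-d}\sum_\rho P_\rho$, the standard identity $\chi^2(\bar P^n,P_0^n)+1=\EE_{\rho,\rho'}(\int p_\rho p_{\rho'}/p_0)^n$ combined with $\EE[e^{t\rho_m\rho'_m}]=\cosh(t)$ coordinate-wise yields
\begin{align*}
\chi^2(\bar P^n,P_0^n)+1\ \leq\ \cosh(c_\phi n\delta^2/d)^d\ \leq\ \exp\bigl(c'n^2\delta^4/d\bigr).
\end{align*}
Choosing $d=\lceil(L^4n^2)^{1/5}\rceil\asymp n^{2/5}$ and $\delta=L/d\asymp n^{-2/5}$ makes this $O(1)$, so $\TV(\bar P^n,P_0^n)\leq\sqrt{\chi^2/2}$ is bounded away from $1$, while the Wasserstein separation achieved is $\varepsilon\asymp\sqrt{\delta}\asymp n^{-1/5}$. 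Le~Cam's inequality $R_n(\varepsilon)\geq 1-\TV(\bar P^n,P_0^n)$ then gives $R_n>1/3$ for appropriate absolute constants, completing the proof. The main obstacle is the $W_2$ lower bound against \emph{every} conditionally independent $q$ rather than just $p_{X|z}p_{Y|z}$; this is precisely what is achieved by the marginal-projection plus $W_2^2$-subadditivity tandem above, and every other step is routine Ingster/Le~Cam bookkeeping.
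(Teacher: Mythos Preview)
Your proposal is correct and follows essentially the same approach as the paper: a four-point conditional law on $\{0,1\}^2$, bin-localized smooth perturbations with Rademacher signs, the reduction $\inf_{q\in\cP_0}\EE_Z W_2\gtrsim\EE_Z W_2(p_{XY|Z},p_{X|Z}p_{Y|Z})$ (which the paper isolates as a separate lemma with the same marginal-projection plus $W_2^2$-subadditivity argument), the observation $W_2\asymp\sqrt{\text{amplitude}}$ on the discrete support, and the balancing $d\asymp n^{2/5}$ with amplitude $\asymp 1/d$. The only cosmetic differences are that the paper's bump is mean-zero rather than nonnegative, and it defers the $\chi^2$/Le~Cam computation to \cite{neykov2021minimax} whereas you carry it out explicitly.
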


One can see that the minimax rate given by Theorem \ref{first:lower:bound} nearly matches (up to logarithmic factors) the rate from the upper bound of Theorem \ref{main:theorem}. Here once again we would like to stress the fact that the minimax optimal rate obtained here under $W_1$ smoothness and $W_2$ separation is slower compared to the rates obtained by \cite{neykov2021minimax}. Hence even though the separation is stronger (see Proposition \ref{W:sep:discards:more:distributions:than:TV:sep}) the added flexibility from only imposing Wasserstein smoothness drives the slower rate. The remaining of this section is dedicated to the proof of the above theorem. The techniques we use are similar to those used by \cite{neykov2021minimax}, but the worst case is quite different. Unlike in \cite{neykov2021minimax} where the authors considered cases where $p_{X,Y|Z}$ are discrete or continuous distributions and obtained different rates, here there is no need for that. The worst case is achieved by a distribution which is discrete $X,Y | Z = z$ for all $z\in [0,1]$, and in fact is concentrated only on $4$ points. Before we detail this construction we will require the following lemma, which is useful when we establish the $W_2$ separation in the alternative.

\begin{lemma}\label{Larrys:lemma}
For any distribution $p = p_{X,Y,Z}$ for $X,Y,Z \in [0,1]$,
$\psi(p)\leq \tilde \psi(p) \leq C\psi(p)$ for some absolute constant $C$
where
$$
\tilde \psi(p) =  \int W_2 (p_{XY|Z=z},p_{X|Z=z}\times p_{Y|Z=z}) d p_Z(z),
$$
and
\begin{align}\label{psi:p:def}
\psi(p) =  \inf_{q \in \cP_0} \int W_2 (p_{XY|Z=z},q_{X|Z=z}\times q_{Y|Z=z}) dp_Z(z).
\end{align}
\end{lemma}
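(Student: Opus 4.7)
The plan is to treat the two inequalities separately. The lower bound $\psi(p) \leq \tilde\psi(p)$ is essentially by definition: I would construct a specific $q \in \cP_0$ witnessing the infimum bound by setting $q_Z := p_Z$, $q_{X|Z=z} := p_{X|Z=z}$, and $q_{Y|Z=z} := p_{Y|Z=z}$, and then taking $q_{X,Y|Z=z} := q_{X|Z=z} \times q_{Y|Z=z}$. This $q$ is conditionally independent by construction, so $q \in \cP_0$, and substituting it into \eqref{psi:p:def} gives exactly $\tilde\psi(p)$.

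For the upper bound $\tilde\psi(p) \leq C\psi(p)$, the idea is to show that for every $q \in \cP_0$,
\begin{align*}
W_2(p_{XY|Z=z}, p_{X|Z=z} \times p_{Y|Z=z}) \leq C\, W_2(p_{XY|Z=z}, q_{X|Z=z} \times q_{Y|Z=z}),
\end{align*}
pointwise in $z$, which will give the result after integrating against $p_Z$ and taking the infimum. The plan is to apply the triangle inequality (Fact \ref{important:fact:about:W}, part 3) with the intermediate point $q_{X|Z=z} \times q_{Y|Z=z}$:
\begin{align*}
W_2(p_{XY|Z=z}, p_{X|Z=z} \times p_{Y|Z=z}) \leq W_2(p_{XY|Z=z}, q_{X|Z=z} \times q_{Y|Z=z}) + W_2(q_{X|Z=z} \times q_{Y|Z=z}, p_{X|Z=z} \times p_{Y|Z=z}).
\end{align*}
For the second term I would invoke the sub-additivity of $W_2^2$ on product distributions (Fact \ref{important:fact:about:W}, part 4) to obtain
\begin{align*}
W_2^2(q_{X|Z=z} \times q_{Y|Z=z}, p_{X|Z=z} \times p_{Y|Z=z}) \leq W_2^2(q_{X|Z=z}, p_{X|Z=z}) + W_2^2(q_{Y|Z=z}, p_{Y|Z=z}).
\end{align*}

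Each marginal term is then bounded by projecting an optimal transport plan: if $\pi^*$ is any optimal coupling achieving $W_2^2(p_{XY|Z=z}, q_{X|Z=z} \times q_{Y|Z=z})$, then its marginal on the $X$-coordinates is a coupling of $p_{X|Z=z}$ and $q_{X|Z=z}$, and its transport cost is bounded by $\int |x-x'|^2 \leq \int \|(x,y)-(x',y')\|_2^2$ against $\pi^*$. This yields
\begin{align*}
W_2^2(q_{X|Z=z}, p_{X|Z=z}) \leq W_2^2(p_{XY|Z=z}, q_{X|Z=z} \times q_{Y|Z=z}),
\end{align*}
and analogously for $Y$. Combining these gives $W_2(q_{X|Z=z} \times q_{Y|Z=z}, p_{X|Z=z} \times p_{Y|Z=z}) \leq \sqrt{2}\, W_2(p_{XY|Z=z}, q_{X|Z=z} \times q_{Y|Z=z})$, so the triangle inequality yields the pointwise bound with constant $C = 1 + \sqrt{2}$. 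Integrating and taking the infimum over $q \in \cP_0$ finishes the proof.

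No single step is a real obstacle here; the only mild subtlety is verifying the projection argument that reduces the marginal $W_2$ distances to the joint $W_2$ distance, which is standard but worth stating carefully.
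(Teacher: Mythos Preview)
Your proposal is correct and follows essentially the same route as the paper: triangle inequality through the product $q_{X|Z=z}\times q_{Y|Z=z}$, sub-additivity of $W_2^2$ on products (Fact~\ref{important:fact:about:W}.4), and the marginal-projection argument to bound $W_2(p_{X|Z=z},q_{X|Z=z})$ by the joint $W_2$. Two minor differences worth noting: you work pointwise in $z$ for an arbitrary $q\in\cP_0$ and then integrate and take the infimum, whereas the paper fixes a (near-)minimizer $p^*$ up front and carries the integrals through---your version sidesteps the existence-of-minimizer caveat; and by bounding $\sqrt{a^2+b^2}$ directly rather than via $a+b$, you obtain $C=1+\sqrt{2}$ instead of the paper's $C=3$.
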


\begin{proof}
Let $p^* = p^*_{X|Z}\times p^*_{Y|Z}$ be the minimizer of
$\psi(p)$ (if a minimizer does not exist we may take a sequence of distributions that converges to it).
Then
\begin{align*}
\tilde\psi(p) &=
\int W_2 (p_{XY|Z=z},p_{X|Z=z}\times p_{Y|Z=z}) dp_Z(z)\\
& \leq
\int W_2 (p_{XY|Z=z},p_{X|Z=z}^*\times p_{Y|Z=z}^*) dp_Z(z)\\
&\ \ \ +
\int W_2 (p_{X|Z=z}\times p_{Y|Z=z},p_{X|Z=z}^*\times p_{Y|Z=z}^*)dp_Z(z)\\
&\stackrel{\mbox{\tiny by Lemma 3 \cite{mariucci2018wasserstein}}}{\leq}
\psi(p) + 
\int \sqrt{[W_2^2 (p_{X|Z=z},p_{X|Z=z}^*) + W_2^2 (p_{Y|Z=z}, p_{Y|Z=z}^*)]} dp_Z(z)\\
&\leq
\psi(p) + 
\int [W_2 (p_{X|Z=z},p_{X|Z=z}^*) + W_2 (p_{Y|Z=z}, p_{Y|Z=z}^*)] dp_Z(z)\\
& \leq
\psi(p) +
\int W_2 (p_{XY|Z=z},p_{X|Z=z}^*\times p_{Y|Z=z}^*) dp_Z(z)+
\int W_2 (p_{XY|Z=z},p_{X|Z=z}^*\times p_{Y|Z=z}^*) dp_Z(z)\\
&= C \psi(p).
\end{align*}
In the above we used that 
\begin{align*}
\int W_2(p_{X|Z=z},p_{X|Z=z}^*) dp_Z(z)\leq \int W_2 (p_{XY|Z=z},p_{X|Z=z}^*\times p_{Y|Z=z}^*) dp_Z(z),
\end{align*}
which follows by similar arguments as in Lemma \ref{Wasserstein:smoothness:lemma}. 
\end{proof}

Let $Z$ be $U([0,1])$. For the null distributions for each $z$ we specify $q_{X,Y|Z = z} = q_{X | Z = z} q_{Y| Z = z}$ as four point masses at $(0,0), (0,1), (1,0)$ and $(1,1)$ with equal probability $\frac{1}{4}$. Under the alternative we specify $p_{X,Y|Z = z}$ as four point masses at $(0,0), (0,1), (1,0)$ and $(1,1)$, where $p_{X,Y|Z=z}(0,0) = p_{X,Y|Z=z}(1,1) = \frac{1}{4} + \delta \xi(z)$ and $p_{X,Y|Z=z}(0,1) = p_{X,Y|Z=z}(1,0) = \frac{1}{4} - \delta \xi(z)$, where $\xi(z)$ is specified as:
\begin{align*}
\xi_\nu(z) = \rho \sum_{j \in [d]} \nu_j h_{j, d}(z),
\end{align*}
where $\rho > 0$ is a constant, $\delta \in \{-1,1\}$ is a Rademacher random variable, $d \in \NN$, $\nu_i \in \{-1,+1\}$, and $h_{j, d}(z) = \sqrt{d}h(d z - j + 1)$ for $z \in [(j-1)/d, j/d]$, and $h$ is an infinitely differentiable function supported on $[0,1]$ 

such that $\int h(z) dz = 0$ and $\int h^2(z) dz = 1$. When perturbing, in order to ensure that we create valid probability distributions, we need to satisfy the conditions that 
\begin{align}\label{eqn:bounds:rho:d}
\frac{1}{4} - \rho \sqrt{d} \|h\|_{\infty} \geq 0,~~~ \frac{1}{4} + \rho \sqrt{d} \|h\|_{\infty} \leq 1.
\end{align}
Clearly, the second inequality in \eqref{eqn:bounds:rho:d} is implied by the first one and is hence redundant. We will ensure the first inequality by the choice of $\rho$ and $d$ to follow. 

We need to show that the two distributions $q$ and $p$ are Wasserstein smooth. This is obvious for $q$ which does not change with $z$. To see this for $p_{X,Y|Z = z} $ and $p_{X,Y|Z = z'} $ we will use the dual characterization of the Wasserstein distance. We have
\begin{align*}
W_1(p_{X,Y|Z = z} , p_{X,Y|Z = z'}) & = \sup_{f \in \operatorname{Lip}(1)} |f(0,0) + f(1,1) - f(0,1) - f(1,0)| |\xi(z) - \xi(z')| \\
& \leq 2 |\xi(z) - \xi(z')|\\
& \leq 2 d^{3/2}\rho \|h'\|_{\infty} |z - z'|,
\end{align*}
since the derivative of $\xi(z)$ is bounded by $d^{3/2}\rho \|h'\|_{\infty}$. Next we will handle $\psi(p)$ as defined in \eqref{psi:p:def}. We will start by checking that $W_1(p_{X,Y|Z =z }, p_{X| Z = z} p_{Y | Z = z})$ is at least $C |\xi(z)|$ for some constant $C$. Once again we use the dual characterization of the Wasserstein distance to obtain
\begin{align*}
W_1(p_{X,Y|Z =z }, p_{X| Z = z} p_{Y | Z = z}) = \sup_{f \in \operatorname{Lip}(1)} |f(0,0) + f(1,1) - f(0,1) - f(1,0)| |\xi(z)| 
\end{align*} 
Take the function $f(x,y) = \frac{1}{\sqrt{2}}|x - y|$ where $x,y \in \RR$. This is a $1$-Lipschitz function in $\|\cdot\|_2$ (since $||x-y| -|x'-y'|| \leq |x - x' + y' - y|\leq |x - x'| +| y- y'| \leq \sqrt{2((x-x')^2 + (y - y')^2)}$). It follows that 
\begin{align*}
W_1(p_{X,Y|Z =z }, p_{X| Z = z} p_{Y | Z = z}) \geq \sqrt{2} |\xi(z)|.
\end{align*}  
Next we show that $W_2^2(p_{X,Y | Z = z}, p_{X | Z = z} p_{Y | Z = z}) \asymp W_1(p_{X,Y | Z = z}, p_{X | Z = z} p_{Y | Z = z})$. This follows since when $x,y \in \{(0,0), (1,1), (0,1), (1,0)\}$ we have
\begin{align*}
\|x-y\|_2 \leq \|x - y\|^2_2 \leq \sqrt{2}\|x - y\|_2. 
\end{align*}
Hence if $\gamma_2$ is an optimal coupling for $W_2(p_{X,Y | Z = z}, p_{X | Z = z} p_{Y | Z = z})$ we have
\begin{align*}
W_2^2(p_{X,Y | Z = z}, p_{X | Z = z} p_{Y | Z = z})  & = \int \|x-y\|^2_2 d \gamma_2(x,y) \\
& \geq \int \|x-y\|_2 d \gamma_2(x,y) \geq W_1(p_{X,Y | Z = z}, p_{X | Z = z} p_{Y | Z = z}).
\end{align*}
On the other hand if $\gamma_1$ is an optimal coupling for $W_1(p_{X,Y | Z = z}, p_{X | Z = z} p_{Y | Z = z})$ we have
\begin{align}\label{W_1:geq:W_2^2}
W_2^2(p_{X,Y | Z = z}, p_{X | Z = z} p_{Y | Z = z})  & \leq \int \|x-y\|^2_2 d \gamma_1(x,y) \nonumber \\
& \leq \sqrt{2} \int \|x-y\|_2 d \gamma_1(x,y) = \sqrt{2} W_1(p_{X,Y | Z = z}, p_{X | Z = z} p_{Y | Z = z}).
\end{align}

As we argued earlier, $\sqrt{W_1(p_{X,Y | Z = z}, p_{X | Z = z} p_{Y | Z = z})} \gtrsim \sqrt{|\xi(z)|} = \sqrt{\rho |\sum_{j \in [d]} \nu_j h_{j, d}(z)|}$, where $\rho > 0$ is a constant, $d \in \NN$, $\nu_i \in \{-1,+1\}$ ,and $h_{j, d}(z) = \sqrt{d}h(d z - j + 1)$ for $z \in [(j-1)/d, j/d]$.  Now, since the functions $h_{j,d}$ have disjoint supports, it follows that $\sqrt{\rho |\sum_{j \in [d]} \nu_j h_{j, d}(z)|} = \sqrt{\rho} \sum_{j \in [d]} \sqrt{|h_{j, d}(z)|}$. Hence $\EE \sqrt{|\xi(z)|} = \EE \sqrt{\rho} \sum_{j \in [d]} \sqrt{|h_{j, d}(Z)|} = d\sqrt{\rho}\sqrt[4]{d} \frac{1}{d} = c\sqrt{\rho}\sqrt[4]{d}$, where we used the fact that $\int \sqrt{|h_{j,d}(z)|}dz = c\sqrt[4]{d} \frac{1}{d}$, for some absolute constant $c$. We conclude that 
\begin{align*}
\varepsilon = \psi(p) \gtrsim \rho^{1/2} d^{1/4}. 
\end{align*}
From here the argument can proceed precisely as in Theorem 4.1 of \cite{neykov2021minimax} where $\ell_1 = \ell_2 = 2$. We conclude that one can select $\rho \asymp \frac{1}{d^{3/2}}$ and $ \frac{1}{d} \asymp \frac{1}{n^{2/5}}$ (for some sufficiently small constants so that \eqref{eqn:bounds:rho:d} are satisfied), to yield a lower bound on $\psi(p) \gtrsim \frac{1}{\sqrt{d}} \gtrsim \frac{1}{n^{1/5}}$. This completes the proof. 

\section{Discussion}\label{discussion:section}

In this paper we considered the problem of minimax Wasserstein conditional independence testing. We proposed a novel test statistic which is nearly optimal in terms of the separation radius. Despite this, there are interesting open questions that remain to be explored. Our current theory allows only for 1-dimensional random variables $X,Y,Z$. It will be interesting (yet also very challenging) to extend our results to the multivariate setting. Furthermore, while in principle our test statistic is implementable in polynomial time, the computational cost which is bigger than linear is likely high. It would be interesting to design fast computational methods to compute our statistic, or propose a statistic which is different in nature altogether yet is minimax optimal and easily computable. Another challenging open question is whether one can come up with a calibration method for the test statistic such as the one proposed in \cite{kim2021local} which is based on local permutations. The difficulty here is the fact that Wasserstein smoothness is not strong enough to apply a result such as Lemma 1 or Lemma 2 of \cite{kim2021local}, which renders it almost impossible to argue directly that a local permutation would control the type I error. Finally, while we have addressed minimax testing, it would be interesting to study the corresponding estimation problem, possibly under $W_2$ loss function with $W_1$ smoothness. We leave these important questions for future work.

\section{Acknowledgements}
This work was partially supported by funding from the NSF grants DMS2113684 and DMS-2310632, as well as an Amazon AI and a Google Research Scholar Award to SB. MN acknowledges support from the NSF grant DMS-2113684.

\bibliographystyle{plainnat}
\bibliography{condindeptesting}

\begin{thebibliography}{51}
\providecommand{\natexlab}[1]{#1}
\providecommand{\url}[1]{\texttt{#1}}
\expandafter\ifx\csname urlstyle\endcsname\relax
  \providecommand{\doi}[1]{doi: #1}\else
  \providecommand{\doi}{doi: \begingroup \urlstyle{rm}\Url}\fi

\bibitem[Albert et~al.(2022)Albert, Laurent, Marrel, and
  Meynaoui]{albert2022adaptive}
M{\'e}lisande Albert, B{\'e}atrice Laurent, Amandine Marrel, and Anouar
  Meynaoui.
\newblock Adaptive test of independence based on hsic measures.
\newblock \emph{The Annals of Statistics}, 50\penalty0 (2):\penalty0 858--879,
  2022.

\bibitem[Arias-Castro et~al.(2018)Arias-Castro, Pelletier, and
  Saligrama]{ery2018remember}
Ery Arias-Castro, Bruno Pelletier, and Venkatesh Saligrama.
\newblock Remember the curse of dimensionality: the case of goodness-of-fit
  testing in arbitrary dimension.
\newblock \emph{Journal of Nonparametric Statistics}, 30\penalty0 (2):\penalty0
  448--471, 2018.

\bibitem[Ba et~al.(2011)Ba, Nguyen, Nguyen, and Rubinfeld]{ba2011sublinear}
Khanh~Do Ba, Huy~L. Nguyen, Huy~N. Nguyen, and Ronitt Rubinfeld.
\newblock Sublinear time algorithms for earth mover's distance.
\newblock \emph{Theory of Computing Systems}, 48\penalty0 (2):\penalty0
  428--442, 2011.

\bibitem[Balakrishnan and Wasserman(2018)]{balakrishnan2018hypothesis}
Sivaraman Balakrishnan and Larry Wasserman.
\newblock Hypothesis testing for high-dimensional multinomials: A selective
  review.
\newblock \emph{The Annals of Applied Statistics}, 12\penalty0 (2):\penalty0
  727--749, 2018.

\bibitem[Balakrishnan and Wasserman(2019)]{balakrishnan2017hypothesis}
Sivaraman Balakrishnan and Larry Wasserman.
\newblock Hypothesis testing for densities and high-dimensional multinomials:
  Sharp local minimax rates.
\newblock \emph{The Annals of Statistics}, 47\penalty0 (4):\penalty0
  1893--1927, 2019.

\bibitem[Baraud(2002)]{baraud2002non}
Yannick Baraud.
\newblock Non-asymptotic minimax rates of testing in signal detection.
\newblock \emph{Bernoulli}, 8\penalty0 (5):\penalty0 577--606, 2002.

\bibitem[Black et~al.(2020)Black, Yeom, and Fredrikson]{black2020fliptest}
Emily Black, Samuel Yeom, and Matt Fredrikson.
\newblock Fliptest: fairness testing via optimal transport.
\newblock In \emph{Proceedings of the 2020 Conference on Fairness,
  Accountability, and Transparency}, pages 111--121, 2020.

\bibitem[Blanchet and Murthy(2019)]{blanchet2019quantifying}
Jose Blanchet and Karthyek Murthy.
\newblock Quantifying distributional model risk via optimal transport.
\newblock \emph{Mathematics of Operations Research}, 44\penalty0 (2):\penalty0
  565--600, 2019.

\bibitem[Canonne(2020)]{canonne2020survey}
Cl{\'e}ment~L Canonne.
\newblock A survey on distribution testing: Your data is big. but is it blue?
\newblock \emph{Theory of Computing}, pages 1--100, 2020.

\bibitem[Canonne et~al.(2018)Canonne, Diakonikolas, Kane, and
  Stewart]{canonne2018testing}
Cl{\'e}ment~L Canonne, Ilias Diakonikolas, Daniel~M Kane, and Alistair Stewart.
\newblock Testing conditional independence of discrete distributions.
\newblock In \emph{2018 Information Theory and Applications Workshop (ITA)},
  pages 1--57. IEEE, 2018.

\bibitem[Carpentier and Verzelen(2021)]{carpentier2021optimal}
Alexandra Carpentier and Nicolas Verzelen.
\newblock Optimal sparsity testing in linear regression model.
\newblock \emph{Bernoulli}, 27\penalty0 (2):\penalty0 727--750, 2021.

\bibitem[Chernozhukov et~al.(2017)Chernozhukov, Galichon, Hallin, and
  Henry]{chernozhukov2017monge}
Victor Chernozhukov, Alfred Galichon, Marc Hallin, and Marc Henry.
\newblock Monge--kantorovich depth, quantiles, ranks and signs.
\newblock \emph{The Annals of Statistics}, 45\penalty0 (1):\penalty0 223--256,
  2017.

\bibitem[Dawid(1979)]{dawid1979conditional}
Philip Dawid.
\newblock Conditional independence in statistical theory.
\newblock \emph{Journal of the Royal Statistical Society: Series B
  (Methodological)}, 41\penalty0 (1):\penalty0 1--15, 1979.

\bibitem[De~Lara et~al.(2021)De~Lara, Gonz{\'a}lez-Sanz, and
  Loubes]{de2021consistent}
Lucas De~Lara, Alberto Gonz{\'a}lez-Sanz, and Jean-Michel Loubes.
\newblock A consistent extension of discrete optimal transport maps for machine
  learning applications.
\newblock \emph{arXiv preprint arXiv:2102.08644}, 2021.

\bibitem[Deb and Sen(2021)]{deb2021multivariate}
Nabarun Deb and Bodhisattva Sen.
\newblock Multivariate rank-based distribution-free nonparametric testing using
  measure transportation.
\newblock \emph{Journal of the American Statistical Association}, pages 1--16,
  2021.

\bibitem[Deb et~al.(2021)Deb, Bhattacharya, and Sen]{deb2021efficiency}
Nabarun Deb, Bhaswar~B Bhattacharya, and Bodhisattva Sen.
\newblock Efficiency lower bounds for distribution-free hotelling-type
  two-sample tests based on optimal transport.
\newblock \emph{arXiv preprint arXiv:2104.01986}, 2021.

\bibitem[Diakonikolas and Kane(2016)]{diakonikolas2016new}
Ilias Diakonikolas and Daniel~M Kane.
\newblock A new approach for testing properties of discrete distributions.
\newblock In \emph{2016 IEEE 57th Annual Symposium on Foundations of Computer
  Science (FOCS)}, pages 685--694. IEEE, 2016.

\bibitem[Finlay et~al.(2020)Finlay, Gerolin, Oberman, and
  Pooladian]{finlay2020learning}
Chris Finlay, Augusto Gerolin, Adam~M Oberman, and Aram-Alexandre Pooladian.
\newblock Learning normalizing flows from entropy-kantorovich potentials.
\newblock \emph{arXiv preprint arXiv:2006.06033}, 2020.

\bibitem[Ghodrati and Panaretos(2021)]{ghodrati2021distribution}
Laya Ghodrati and Victor~M Panaretos.
\newblock Distribution-on-distribution regression via optimal transport maps.
\newblock \emph{arXiv preprint arXiv:2104.09418}, 2021.

\bibitem[Ghosal and Sen(2022)]{ghosal2022multivariate}
Promit Ghosal and Bodhisattva Sen.
\newblock Multivariate ranks and quantiles using optimal transport:
  Consistency, rates and nonparametric testing.
\newblock \emph{The Annals of Statistics}, 50\penalty0 (2):\penalty0
  1012--1037, 2022.

\bibitem[Gordaliza et~al.(2019)Gordaliza, Del~Barrio, Fabrice, and
  Loubes]{gordaliza2019obtaining}
Paula Gordaliza, Eustasio Del~Barrio, Gamboa Fabrice, and Jean-Michel Loubes.
\newblock Obtaining fairness using optimal transport theory.
\newblock In \emph{International Conference on Machine Learning}, pages
  2357--2365. PMLR, 2019.

\bibitem[Hallin et~al.(2021)Hallin, Del~Barrio, Cuesta-Albertos, and
  Matr{\'a}n]{hallin2021distribution}
Marc Hallin, Eustasio Del~Barrio, Juan Cuesta-Albertos, and Carlos Matr{\'a}n.
\newblock Distribution and quantile functions, ranks and signs in dimension d:
  A measure transportation approach.
\newblock \emph{The Annals of Statistics}, 49\penalty0 (2):\penalty0
  1139--1165, 2021.

\bibitem[Indyk and Thaper(2003)]{indyk2003fast}
Piotr Indyk and Nitin Thaper.
\newblock Fast image retrieval via embeddings.
\newblock In \emph{3rd international workshop on statistical and computational
  theories of vision}, volume~2, page~5. Nice, France, 2003.

\bibitem[Ingster and Suslina(2003)]{ingster2003nonparametric}
J.I. Ingster and I.A. Suslina.
\newblock \emph{Nonparametric Goodness-of-Fit Testing Under Gaussian Models}.
\newblock Lecture Notes in Statistics. Springer, 2003.

\bibitem[Ingster(1982)]{ingster1982minimax}
Yuri~Izmailovich Ingster.
\newblock On the minimax nonparametric detection of signals in white gaussian
  noise.
\newblock \emph{Problemy Peredachi Informatsii}, 18\penalty0 (2):\penalty0
  61--73, 1982.

\bibitem[Kantorovich(1942)]{kantorovich1942translocation}
Leonid~V Kantorovich.
\newblock On the translocation of masses.
\newblock In \emph{Dokl. Akad. Nauk. USSR (NS)}, volume~37, pages 199--201,
  1942.

\bibitem[Kim et~al.(2022{\natexlab{a}})Kim, Balakrishnan, and
  Wasserman]{kim2022minimax}
Ilmun Kim, Sivaraman Balakrishnan, and Larry Wasserman.
\newblock Minimax optimality of permutation tests.
\newblock \emph{The Annals of Statistics}, 50\penalty0 (1):\penalty0 225--251,
  2022{\natexlab{a}}.

\bibitem[Kim et~al.(2022{\natexlab{b}})Kim, Neykov, Balakrishnan, and
  Wasserman]{kim2021local}
Ilmun Kim, Matey Neykov, Sivaraman Balakrishnan, and Larry Wasserman.
\newblock Local permutation tests for conditional independence.
\newblock \emph{The Annals of Statistics}, 50\penalty0 (6):\penalty0
  3388--3414, 2022{\natexlab{b}}.

\bibitem[Kim et~al.(2023)Kim, Neykov, Balakrishnan, and
  Wasserman]{kim2023conditional}
Ilmun Kim, Matey Neykov, Sivaraman Balakrishnan, and Larry Wasserman.
\newblock {Conditional Independence Testing for Discrete Distributions: Beyond
  $\chi^2$- and $G$-tests}.
\newblock \emph{arXiv preprint arXiv:2308.05373}, 2023.

\bibitem[Koller and Friedman(2009)]{Koller2009Probabilistic}
Daphne Koller and Nir Friedman.
\newblock \emph{Probabilistic graphical models: principles and techniques}.
\newblock MIT press, 2009.

\bibitem[Komiske et~al.(2020)Komiske, Mastandrea, Metodiev, Naik, and
  Thaler]{komiske2020exploring}
Patrick~T Komiske, Radha Mastandrea, Eric~M Metodiev, Preksha Naik, and Jesse
  Thaler.
\newblock Exploring the space of jets with cms open data.
\newblock \emph{Physical Review D}, 101\penalty0 (3):\penalty0 034009, 2020.

\bibitem[Lepski and Spokoiny(1999)]{lepski1999minimax}
Oleg~V Lepski and Vladimir~G Spokoiny.
\newblock Minimax nonparametric hypothesis testing: the case of an
  inhomogeneous alternative.
\newblock \emph{Bernoulli}, 5\penalty0 (2):\penalty0 333--358, 1999.

\bibitem[Li et~al.(2013)Li, Wang, and Zhang]{li2013novel}
Peihua Li, Qilong Wang, and Lei Zhang.
\newblock A novel earth mover's distance methodology for image matching with
  gaussian mixture models.
\newblock In \emph{Proceedings of the IEEE International Conference on Computer
  Vision}, pages 1689--1696, 2013.

\bibitem[Manole et~al.(2021)Manole, Balakrishnan, Niles-Weed, and
  Wasserman]{manole2021plugin}
Tudor Manole, Sivaraman Balakrishnan, Jonathan Niles-Weed, and Larry Wasserman.
\newblock Plugin estimation of smooth optimal transport maps, 2021.

\bibitem[Margaritis(2005)]{margaritis2005distribution}
Dimitris Margaritis.
\newblock Distribution-free learning of bayesian network structure in
  continuous domains.
\newblock In \emph{AAAI}, volume~5, pages 825--830, 2005.

\bibitem[Mariucci and Rei{\ss}(2018)]{mariucci2018wasserstein}
Ester Mariucci and Markus Rei{\ss}.
\newblock Wasserstein and total variation distance between marginals of
  l{\'e}vy processes.
\newblock \emph{Electronic Journal of Statistics}, 12\penalty0 (2):\penalty0
  2482--2514, 2018.

\bibitem[Monge(1781)]{monge1781memoire}
Gaspard Monge.
\newblock M{\'e}moire sur la th{\'e}orie des d{\'e}blais et des remblais.
\newblock \emph{Mem. Math. Phys. Acad. Royale Sci.}, pages 666--704, 1781.

\bibitem[Neykov et~al.(2021)Neykov, Balakrishnan, and
  Wasserman]{neykov2021minimax}
Matey Neykov, Sivaraman Balakrishnan, and Larry Wasserman.
\newblock Minimax optimal conditional independence testing.
\newblock \emph{The Annals of Statistics}, 49\penalty0 (4):\penalty0
  2151--2177, 2021.

\bibitem[Onken et~al.(2021)Onken, Fung, Li, and Ruthotto]{onken2021ot}
Derek Onken, Samy~Wu Fung, Xingjian Li, and Lars Ruthotto.
\newblock Ot-flow: Fast and accurate continuous normalizing flows via optimal
  transport.
\newblock In \emph{Proceedings of the AAAI Conference on Artificial
  Intelligence}, volume~35, pages 9223--9232, 2021.

\bibitem[Pearl(2014)]{pearl2014probabilistic}
Judea Pearl.
\newblock \emph{Probabilistic reasoning in intelligent systems: networks of
  plausible inference}.
\newblock Elsevier, 2014.

\bibitem[Rigollet and Weed(2019)]{rigollet2019uncoupled}
Philippe Rigollet and Jonathan Weed.
\newblock Uncoupled isotonic regression via minimum wasserstein deconvolution.
\newblock \emph{Information and Inference: A Journal of the IMA}, 8\penalty0
  (4):\penalty0 691--717, 2019.

\bibitem[Rubner et~al.(2000)Rubner, Tomasi, and Guibas]{rubner2000earth}
Yossi Rubner, Carlo Tomasi, and Leonidas~J Guibas.
\newblock The earth mover's distance as a metric for image retrieval.
\newblock \emph{International journal of computer vision}, 40\penalty0
  (2):\penalty0 99--121, 2000.

\bibitem[Sandler and Lindenbaum(2011)]{sandler2011nonnegative}
Roman Sandler and Michael Lindenbaum.
\newblock Nonnegative matrix factorization with earth mover's distance metric
  for image analysis.
\newblock \emph{IEEE Transactions on Pattern Analysis and Machine
  Intelligence}, 33\penalty0 (8):\penalty0 1590--1602, 2011.

\bibitem[Shah and Peters(2020)]{shah2018hardness}
Rajen~D Shah and Jonas Peters.
\newblock The hardness of conditional independence testing and the generalised
  covariance measure.
\newblock \emph{The Annals of Statistics}, 48\penalty0 (3):\penalty0
  1514--1538, 2020.

\bibitem[Slawski and Sen(2022)]{slawski2022permuted}
Martin Slawski and Bodhisattva Sen.
\newblock Permuted and unlinked monotone regression in rd: an approach based on
  mixture modeling and optimal transport.
\newblock \emph{CoRR}, 2022.

\bibitem[Spirtes et~al.(2000)Spirtes, Glymour, Scheines, Heckerman, Meek,
  Cooper, and Richardson]{spirtes2000causation}
Peter Spirtes, Clark~N Glymour, Richard Scheines, David Heckerman, Christopher
  Meek, Gregory Cooper, and Thomas Richardson.
\newblock \emph{Causation, prediction, and search}.
\newblock MIT press, 2000.

\bibitem[Valiant and Valiant(2017)]{valiant2017automatic}
Gregory Valiant and Paul Valiant.
\newblock An automatic inequality prover and instance optimal identity testing.
\newblock \emph{SIAM Journal on Computing}, 46\penalty0 (1):\penalty0 429--455,
  2017.

\bibitem[Villani(2009)]{villani2009optimal}
C{\'e}dric Villani.
\newblock \emph{Optimal transport: old and new}, volume 338.
\newblock Springer, 2009.

\bibitem[Warren(2021)]{warren2021wasserstein}
Andrew Warren.
\newblock Wasserstein conditional independence testing.
\newblock \emph{arXiv preprint arXiv:2107.14184}, 2021.

\bibitem[Weed and Bach(2019)]{weed2019sharp}
Jonathan Weed and Francis Bach.
\newblock Sharp asymptotic and finite-sample rates of convergence of empirical
  measures in wasserstein distance.
\newblock \emph{Bernoulli}, 25\penalty0 (4A):\penalty0 2620--2648, 2019.

\bibitem[Zhang et~al.(2011)Zhang, Peters, Janzing, and
  Sch\"{o}lkopf]{zhang2012kernel}
Kun Zhang, Jonas Peters, Dominik Janzing, and Bernhard Sch\"{o}lkopf.
\newblock {Kernel-Based Conditional Independence Test and Application in Causal
  Discovery}.
\newblock In \emph{Proceedings of the Twenty-Seventh Conference on Uncertainty
  in Artificial Intelligence}, UAI'11, pages 804--813, Arlington, Virginia,
  USA, 2011. AUAI Press.

\end{thebibliography}

\appendix

\newpage 

\appendix

\section{Deferred Proofs}

\begin{proof}[Proof of Fact \ref{important:fact:about:W}] \leavevmode
\begin{enumerate}
\item  On bounded domains $W_2(p,q) \lesssim \TV(p,q)$ \cite[See Lemma 3 and Theorem 6.15 ][respectively]{slawski2022permuted, villani2009optimal} where $\lesssim$ denotes inequality up to absolute constant factors.
\item This result can be found in Equation~(6.4) of \cite{villani2009optimal}.
\item The proof of this result can be found on page 77 of \cite{villani2009optimal}. 
\item This follows from Lemma 3 of \cite{mariucci2018wasserstein}.
\item This follows as in \eqref{W_1:geq:W_2^2}.
\end{enumerate}
\end{proof}

\begin{proof}[Proof of Lemma \ref{weed:bach:result}] Consider first $\tilde W^2_2(p,q)=  \inf_{\gamma \in \Gamma(\mu, \nu)} \int \frac{\|x-y\|^2_2}{2} d \gamma(x,y)$. The grid $Q^k$ with side Euclidean lengths (mostly) $\frac{1}{2^k}$ centered at the point $\eta$ forms a dyadic partitioning (see Definition 1 \citep{weed2019sharp}) for the scaled norm $\|x-y\|_2/\sqrt{2}$. The only condition that we need to check is whether $\diam(Q) \leq \gamma^k$ for any $Q \in Q^k$. Since $Q \in Q^k$, then we have for $(x,y) \in Q$: $\|x-y\|_2^2\leq \frac{2}{2^{2k}}$ and hence $\frac{\|x-y\|_2^2}{2} \leq \frac{1}{2^{2k}}$, so it is a dyadic partitioning with $\gamma = \frac{1}{2}$. By Proposition 1 of \cite{weed2019sharp} we immediately conclude that 
\begin{align*}
\tilde W^2_2(p,q) \leq \frac{1}{2^{2\lceil \log_2(d) \rceil}} + \sum_{k = 1}^{\lceil \log_2(d) \rceil} \frac{1}{2^{2(k - 1)}} \sum_{A_{ij}^k \in Q^k} |p(A_{ij}^k) - q(A_{ij}^k)|.
\end{align*}
Multiplying back by $2$ on both sides proves the desired result (with a multiplicative constant $4$).  
\end{proof}

\begin{proof}[Proof of Lemma \ref{Wasserstein:smoothness:lemma}] The first inequality is true by assumption. The second inequality simply plugs in the distribution $p_{X | Z = z} p_{Y | Z =z}$ in place of $q_{X,Y|Z = z}$. We now prove the last inequality. 

We will first show that 
the distributions $p_{X,Y|Z \in C_m}$ and $p_{X,Y |Z = z'}$ for any $z' \in C_m$ are close in the $W_2$ distance. To see this, fix a $z'$ and suppose that $\gamma_z(x,y,x',y')$ is an optimal coupling between $P_{X,Y|Z}(x,y|z')$ and $P_{X,Y|Z}(x,y|z)$ which minimizes the Wasserstein distance
\begin{align*}
W^2_2(p_{X,Y| Z = z'}, p_{X,Y|Z = z}) = \int \|(x,y)- (x',y')\|_2^2 d \gamma_z(x,y,x',y'),
\end{align*}
and satisfies $\gamma_z(x,y,\infty,\infty) = P_{X,Y|Z}(x,y|z')$ and $\gamma_z(\infty,\infty, x',y') = P_{X,Y|Z}(x',y'|z)$. Such an optimal coupling exists due to Theorem 4.1 of \cite{villani2009optimal}. 
Take the mixture of such distributions over $z$, i.e., consider the coupling $\int_{C_m}\gamma_z(x,y,x',y')d \tilde P(z)$, where $d \tilde P(z) = d P(z)/\mathbb{P}(Z \in C_m)$. Note that this is a coupling between the distributions $P_{X,Y|Z}(x,y|z')$ and $P_{X,Y|Z}(x,y| z \in C_m)$, since $\int_{C_m}\gamma_z(x,y,\infty,\infty)d \tilde P(z) = P_{X,Y|Z}(x,y|z')$ and 
\begin{align*}
\int_{C_m}\gamma_z(\infty,\infty,x',y')d \tilde P(z) = \int_{C_m}P_{X,Y|Z}(x',y'|z)d \tilde P(z) = P_{X,Y|Z \in C_m}(x',y' | z \in C_m).
\end{align*}
 Now consider
\begin{align}\label{wasserstein:chain:of:inequalities}
W^2_2(p_{X,Y|Z \in C_m}, p_{X,Y|Z = z'}) &\leq \int \|(x,y)- (x',y')\|_2^2 d \int_{C_m}\gamma_z(x,y,x',y')d \tilde P(z) \nonumber \\
& = \int_{C_m} \int  \|(x,y)- (x',y')\|_2^2 d \gamma_z(x,y,x',y')d \tilde P(z)\nonumber \\
& = \int_{C_m} W_2^2(p_{X,Y| Z = z'}, p_{X,Y|Z = z}) d\tilde P(z) \nonumber\\
& \lesssim L \operatorname{diam}(C_m),
\end{align}
where the last inequality follows from the fact that $W_1 \gtrsim W_2^2$, since we are on a bounded domain and using Assumptions \ref{Wasserstein:smoothness:assumption} and \ref{assumption:W2:separation}.

We will now show that $W_1(p_{X,Y|Z = z}, p_{X,Y|Z = z'}) \geq W_1(p_{X | Z = z}, p_{X | Z=z'})$. Let $\gamma(x,y,x',y')$ denote an optimal coupling between $p_{X,Y|Z = z}$ and $p_{X,Y|Z = z'}$. Note that $\gamma(x,\infty, x', \infty) = \int_{y,y'} d \gamma(x,y,x',y')$ is a coupling between $p_{X | Z = z} $ and $p_{X | Z= z'}$. Thus we have
\begin{align}
W_1(p_{X,Y|Z = z}, p_{X,Y|Z = z'}) & = \int \|(x,y) -  (x',y')\|_2 d \gamma(x,y,x',y') \nonumber \\
& \geq \int \|(x,0) -  (x', 0)\|_2 d \gamma(x,\infty,x',\infty) \nonumber \\
& \geq  W_1(p_{X | Z = z}, p_{X | Z=z'}).\label{W_1:joint:bigger:than:marginals}
\end{align}
Similarly, one can argue that $W_1(p_{X,Y|Z = z}, p_{X,Y|Z = z'})  \geq W_1(p_{Y | Z = z}, p_{Y | Z=z'})$. Hence by the same reasoning as in \eqref{wasserstein:chain:of:inequalities} we may show that under the condition $W_1(p_{X,Y|Z = z}, p_{X,Y|Z = z'}) \leq L|z - z'|$ we have
\begin{align*}
\max\bigl(W^2_2(p_{X | Z = z}, p_{X| Z \in C_m}), W^2_2(p_{Y| Z = z}, p_{Y| Z \in C_m})\bigr) \lesssim L\operatorname{diam}(C_m). 
\end{align*}
Next we will show that for any $z \in C_m$ we have
\begin{align*}
|W_2(p_{X,Y| Z = z}, p_{X|Z = z} p_{Y| Z = z}) - W_2(p_{X,Y| Z\in C_m}, p_{X|Z \in C_m} p_{Y| Z \in C_m}) | \lesssim \bigl(L\operatorname{diam}(C_m)\bigr)^{1/2}.
\end{align*}
First we observe that:
\begin{align*}
W_2(p_{X|Z = z} p_{Y| Z = z}, p_{X|Z \in C_m} p_{Y| Z \in C_m}) & \leq \sqrt{W^2_2(p_{X|Z = z}, p_{X|Z \in C_m})  + W^2_2(p_{Y|Z = z}, p_{Y|Z \in C_m} )}
\end{align*}
where we used Lemma 3 of \cite{mariucci2018wasserstein}, which shows that the Wasserstein distance squared with a distance function equal to $\|\cdot\|_2$  norm, is sub-additive on product distributions. Next, by the triangle inequality we have
\begin{align*}
\MoveEqLeft |W_2(p_{X,Y| Z = z}, p_{X|Z = z} p_{Y| Z = z}) - W_2(p_{X,Y| Z\in C_m}, p_{X|Z \in C_m} p_{Y| Z \in C_m}) | \\
& \leq W_2(p_{X,Y| Z = z}, p_{X,Y| Z\in C_m}) + W_2(p_{X|Z = z} p_{Y| Z = z}, p_{X|Z \in C_m} p_{Y| Z \in C_m})\\
& \leq W_2(p_{X,Y| Z = z}, p_{X,Y| Z\in C_m}) + \sqrt{W^2_2(p_{X|Z = z}, p_{X|Z \in C_m})  + W^2_2(p_{Y|Z = z}, p_{Y|Z \in C_m} )}\\
& \lesssim \bigl(L\operatorname{diam}(C_m)\bigr)^{1/2}.
\end{align*}
Integrating the above inequalities over $z$ we obtain
\begin{align*}
\int_{C_m} W_2(p_{X,Y| Z = z}, p_{X|Z = z} p_{Y| Z = z})  d P(z) & \leq [W_2(p_{X,Y| Z\in C_m}, p_{X|Z \in C_m} p_{Y| Z \in C_m}) \\
& + \kappa(L\operatorname{diam}(C_m))^{1/2}]p_m,
\end{align*}
where $\kappa > 0$ is an absolute constant. Summing up over $m$ gives the inequality that we wanted to show.
\end{proof}

\end{document}